\tikzset{
 % LabelStyle/.style = { rectangle, rounded corners, draw,
                      %  minimum width = 2em, fill = yellow!50,
                       % text = red, font = \bfseries },
  VertexStyle/.append style = { inner sep=1pt,
                                font = \bfseries},
  EdgeStyle/.append style = {->,bend left}
 }
\theoremstyle{plain}
\newtheorem{thm}{Theorem}
\newtheorem{definition}{Definition}
\newtheorem{lemma}{Lemma}
\newtheorem*{cor*}{Corollary}
\newtheorem*{conj*}{Conjecture}
\newtheorem{prop}{Proposition}
\title{Peierls barrier for countable Markov shifts}
\author{Jose Chauta\thanks{This study was financed in part by the Coordenação de Aperfeiçoamento de Pessoal de Nível Superior - Brasil (CAPES) - Finance Code 001. Parts of these results were in the author's PhD thesis.} \\
\footnotesize{Department of Mathematics, IME-USP, Brazil}\\
\footnotesize{\texttt{manuelchauta@gmail.com}}
 \and Ricardo Freire\thanks{Supported by FAPESP process 2016/25053-8.} \\
\footnotesize{Department of Mathematics, IME-USP, Brazil}\\
\footnotesize{\texttt{rfreire@usp.br}}
}
\begin{document}
\bibliographystyle{alpha}
\maketitle
%\onehalfspacing
\begin{abstract}
We prove the existence of calibrated uniformly continuous subactions for coercive potentials with bounded variation defined on topologically transitive Markov shifts with countable alphabet through the construction of the Peierls barrier in this context. Also, we characterize the existence of bounded calibrated subactions in the same context.
\end{abstract}

\vspace*{5mm}

{\footnotesize {\bf Keywords:} Peierls barrier, subactions, countable Markov shifts.}

\vspace*{5mm}

{\footnotesize {\bf Mathematics Subject Classification (2000):} 37Axx}

\vspace*{5mm}

\section{Introduction}
Ergodic theory, optimization and thermodynamic formalism have interesting connections. As showed in \cite{blt06,clt01}, some techniques in thermodynamic formalism allow us to find subactions, a useful tool in ergodic optimization. In this paper we extend some results well known in BIP Markov shifts and finite alphabet Markov shifts to noncompact transitive Markov shifts  and coercive potentials with bounded variation.

Given a matrix $A:\mathbb{N}\times \mathbb{N}\to \{0,1\}$ the Markov shift is constructed as the set $\Sigma=\Sigma_A$ of the sequences $x_0x_1\dots $ such that $A(x_i,x_{i+1})=1$, the dynamic $\sigma((x_i)_{i\geq0})=(x_{i+1})_{i\geq0}$ is also called the shift. In this work we assume that $\sigma$ is transitive. Additionally, if a continuous potential $f:\Sigma\to\mathbb{R}$ is defined and the set of $\sigma$-invariant probability measures $\mathcal{M}$ is considered, we can study the next interesting object which is  central in ergodic optimization \cite{Jen}

\begin{equation*}
 m(f):=\sup\left\{ \int f d\mu: \mu\in \mathcal{M}\right\}.
\end{equation*}
 We denote $m(f)$ by $m$ in this paper.
Any measure which attains this supremum is called a maximizing measure.  Existence and properties of maximizing measures over noncompact shift spaces have been studied recently. In \cite{bf14}, \emph{e.g}, it is shown that there exist maximizing measures for coercive potentials with finite variation. It is also proved that there exists a finite subshift which supports every maximizing measure.

Given a continuous potential $V$ satisfying
\begin{equation*}
 f(x)\leq V\circ \sigma(x)-V(x)+m(f)\,,
\end{equation*}
we say that $V$ is a subaction (for $f$). The set of points such that $ f(x)= V\circ \sigma(x)-V(x)+m(f)$ is called the contact set \cite{garibaldi08, garibaldi09}. This set is contained in the set of $f$-nonwandering points. The set of $f$-nonwandering points are the basis on which the Peierls barrier is defined. One important aspect of subactions is that it allows us to characterize the support of all maximizing measures: every maximizing measure has its support in the contact set. Also, if the maximizing measure is unique, then the contact set is uniquely ergodic. A subaction $V$ is calibrated if for any $x\in\Sigma$ there exists $y$ in the contact set such that $\sigma(y)=x$. Calibrated subactions have been deeply studied, see \cite{clt01, garibaldi08}. For example, it has been proved, for the compact and BIP cases, that when the maximizing measure is unique, there is only one calibrated subaction up to a constant.

Given $x,y\in\Sigma$ we define the Peierls barrier $S_f(y,x)$ as a measure of the 'best way to go from $y$ to $x$ maximizing the free energy' (see \cite{Edgardo} and the formal definition in the following section). This construction has been used in  \cite{clt01, Edgardo} and others in order to construct uniformly continuous subactions. As we will show, this construction can be done even without the existence of a Gibbs measure, that is, outside the BIP case \cite{sarig03}: our aim in this paper is to prove that the notion of Peierls barrier, \cite{clt01, garibaldi08} for finite alphabet Markov shift and \cite{Edgardo} in the BIP case, can still be well defined in the general case of transitive Markov shifts. Also, we prove that this barrier, just as in the previous cases, defines a uniformly continuous subaction generalizing some of the mentioned results.

From now on, we denote by $\mu_{max}$ one of the maximizing measures for $f$, and by \cite{bf14} we know it exists since the potential is coercive.

Our first result shows that the Peierls Barrier is well defined when we fix $\overline{y}$ in the $f$-nonwandering set. Also, we prove that the barrier is a bounded above subaction, which is not obvious from its definition.
\begin{thm}\label{ThmA}
 Let $\Sigma$ be a topologically transitive Markov shift and $f$ be a coercive potential with bounded variation. Fix $\overline{y}\in \textrm{supp}(\mu_{max})$, then the Peierls barrier  $S_f:=S_f(\overline{y},\cdotp)$ is a calibrated subaction, which is uniformly continuous and bounded above.
\end{thm}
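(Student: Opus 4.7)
The plan is to work directly with the double-limit form of the Peierls barrier that will be introduced in the next section, namely
\[
S_f(\overline{y}, x) = \lim_{k\to\infty}\, \limsup_{n\to\infty}\, A_{k,n}(x), \qquad A_{k,n}(x) := \sup\bigl\{ S_n f(z) - nm \,:\, \sigma^n z = x,\ z \in [\overline{y}_0,\dots,\overline{y}_k] \bigr\},
\]
and to derive each of the four assertions in the statement separately. The three hypotheses play distinct roles: $\overline{y}\in\textrm{supp}(\mu_{max})$, together with quasi-generic orbits of $\mu_{max}$, provides returns with Birkhoff average close to $m$ and keeps the barrier non-trivial; coercivity is what prevents orbits from escaping to infinity in the countable alphabet; and bounded variation makes nearby orbits interchangeable up to a bounded correction.

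For well-definedness and the uniform upper bound, I would argue that any competitor $z$ contributing to $A_{k,n}(x)$ with a large Birkhoff sum cannot visit symbols $a$ on which $f$ is very negative too many times. Coercivity then forces $S_n f(z) - nm$ to be bounded above by a constant independent of $n$, $k$ and $x$, proving well-definedness and the ``bounded above'' conclusion simultaneously. With the barrier known to be finite, the subaction inequality $f(x)+S_f(\overline{y},x)-m \leq S_f(\overline{y},\sigma x)$ follows by the usual concatenation argument: every length-$n$ competitor ending at $x$ gives a length-$(n+1)$ competitor ending at $\sigma x$ with Birkhoff sum larger by $f(x)$, and the double limit preserves this inequality. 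For uniform continuity I would use bounded variation: if $x, x' \in \Sigma$ coincide on their first $\ell$ coordinates, each competitor $z$ for $A_{k,n}(x)$ is paired with a competitor $z'$ for $A_{k,n}(x')$ agreeing with $z$ in its first $n$ coordinates, and $|S_n f(z) - S_n f(z')| \leq \sum_{j \geq \ell}\mathrm{var}_j f$; this bound tends to $0$ as $\ell\to\infty$ and transfers directly to the barrier.

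The delicate step is calibration: for each $x$ I must exhibit a preimage $w$ of $x$ with $S_f(\overline{y},x) = f(w) + S_f(\overline{y},w) - m$. I would pick a near-optimal sequence $z^{(n)}$ of competitors with $S_n f(z^{(n)}) - nm \to S_f(\overline{y},x)$, set $w^{(n)} := (z^{(n)}_{n-1}, x_0, x_1, \dots)$, and use the split $S_n f(z^{(n)}) - nm = [S_{n-1} f(z^{(n)}) - (n-1)m] + [f(w^{(n)}) - m]$, in which the first bracket is bounded above by $A_{k,n-1}(w^{(n)})$ and hence, in the limit, by $S_f(\overline{y},w^{(n)})$. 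The hard part is that the alphabet is not compact, so the first symbols $z^{(n)}_{n-1}$ could a priori escape every finite set. Coercivity is precisely what rules this out: if those symbols diverged, $f(w^{(n)})$ would tend to $-\infty$, and the split together with the uniform upper bound already proved would contradict the near-optimality of $z^{(n)}$. Hence $(z^{(n)}_{n-1})_n$ lies in a finite subset of the alphabet, a subsequence $w^{(n)} \to w$ exists, and the uniform continuity of $S_f(\overline{y},\cdot)$ together with continuity of $f$ allows passage to the limit, delivering the equality that defines calibration.
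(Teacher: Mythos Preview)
Your outline for the subaction inequality, uniform continuity, and calibration is sound, and your calibration argument is in fact more direct than the paper's: the paper first proves two ``cutting'' lemmas showing that the supremum defining $S_f^\epsilon(x)$ on each cylinder $[a]$ is attained by competitors whose symbols all lie below a fixed bound $N(a)$, then uses this to reduce to the compact-alphabet theory. Your route---use the uniform upper bound on $S_f^\epsilon$ together with coercivity to trap the symbol $z^{(n)}_{n-1}$ in a finite set---avoids that machinery. (One small correction: the first bracket is bounded by $S_f^{\epsilon}(\overline{y},w^{(n)})$ for any $\epsilon>\epsilon_n$, not directly by $S_f(\overline{y},w^{(n)})$; but once $w^{(n)}=w$ along a subsequence, the $\limsup$ characterization of the barrier gives what you need.)

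There is, however, a genuine gap in your upper-bound step. You write that ``coercivity forces $S_nf(z)-nm$ to be bounded above by a constant independent of $n,k,x$'', but coercivity only controls the contribution of large symbols; it says nothing about orbits that stay on small symbols where $f$ may well exceed $m$. The missing idea is a \emph{closing} argument: given a competitor $z$ with $\sigma^n z=x$ and $z$ close to $\overline{y}$, append a fixed connecting word $v$ from $x_0$ back to $\overline{y}_0$ to form a periodic point $\bar z$ of period $n+|v|$. Since $m$ dominates every periodic average, $S_{n+|v|}(f-m)(\bar z)\le 0$, hence $S_n(f-m)(\bar z)\le |v|\bigl(m-\inf_{j}\inf f|_{[v_j]}\bigr)$; bounded variation then transfers this to $z$ with an extra $Var(f)$. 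This gives finiteness for each $x$; the \emph{uniform} bound requires a further case split on the last symbol $z_{n-1}$ (small: compare to finitely many reference points; large: coercivity makes that term absorb the rest). Without some form of this periodization, the upper bound does not follow from coercivity alone. The lower bound $S_f(x)>-\infty$ also needs an actual argument: your phrase ``keeps the barrier non-trivial'' points in the right direction (returns to $\overline{y}$ with near-zero cost, concatenated with a fixed path to $x$), but this should be made explicit, since your calibration step uses $S_f(x)>-\infty$ to derive the contradiction.
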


The following result is a generalization  for some of the results in \cite{clt01,Edgardo}. We prove that the Peierls barrier is an infimum within the set of continuous subactions and, if there exists an unique maximizing measure, they are unique up to a constant.

\begin{thm}\label{ThmB}
Let $\Sigma$ be a topologically transitive Markov shift and $f$ be a coercive potential with bounded variation.  Fix $\overline{y}\in \textrm{supp}(\mu_{max})$ and let $S_f$ be the Peierls barrier $S_f(x)=S_f(\overline{y},x)$. Then
\begin{enumerate}
 \item If $V$ is another continuous subaction,  then for any $x\in\Sigma$ \[S_f(x)\leq
V(x)-V(\overline{y}).\]
\item If $f$ is a H\"older continuous potential, then $S_f$ has bounded variation.
\item If the maximizing measure is unique, then for every bounded calibrated subaction $V$,
\[S_f(x)=
V(x)-V(\overline{y}) \quad \forall x\,,\]
and in particular,  two bounded calibrated subactions differ only by a constant.
\end{enumerate}
\end{thm}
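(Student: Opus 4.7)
My plan is to dispatch the three items in order, with items (1) and (2) handled by telescoping and iterated calibration respectively, and item (3), which I expect to be the main obstacle, handled by combining the calibration of both $V$ and $S_f$ with an empirical-measure argument in the spirit of \cite{bf14}.

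For item (1), iterating $f \leq V\circ\sigma - V + m$ over $n$ steps gives $S_n f(z) - nm \leq V(\sigma^n z) - V(z)$. Plugging in the approximating preimage sequences $z$ close to $\overline{y}$ with $\sigma^n z = x$ that enter the definition of $S_f(\overline{y}, x)$ and invoking continuity of $V$ at $\overline{y}$, the relevant $\limsup$ is bounded by $V(x) - V(\overline{y})$.

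For item (2), Theorem A supplies an iterated calibration: given $x, x'$ in the same cylinder of length $n$, one obtains $y_n$ with $\sigma^n y_n = x$ and $S_f(x) = S_f(y_n) + S_n f(y_n) - nm$. Splicing the first $n$ coordinates of $y_n$ with the tail of $x'$ produces an admissible $y_n'$ with $\sigma^n y_n' = x'$ agreeing with $y_n$ on the first $n$ symbols. Hölder continuity gives a summable variation $\sum_{k\geq 1} V_k(f) =: C < \infty$, so $|S_n f(y_n) - S_n f(y_n')| \leq C$ independently of $n$; uniform continuity of $S_f$ controls $|S_f(y_n) - S_f(y_n')|$ by the modulus of continuity on cylinders of length $n$. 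Combined with the subaction inequality $S_f(x') \geq S_f(y_n') + S_n f(y_n') - nm$ and a symmetric swap of $x, x'$, this bounds $|S_f(x) - S_f(x')|$ uniformly in $n$.

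For item (3), set $W := V - V(\overline{y}) - S_f$, which is continuous and, by item (1), non-negative; boundedness of $V$ together with $0 \leq W \leq V - V(\overline{y})$ makes $W$ bounded. The plan is in two steps. First, show $W \equiv 0$ on $\mathrm{supp}(\mu_{\max})$: by \cite{bf14} this support is compact, and any invariant probability $\nu$ carried by it lies in the contact set of $V$, so the identity $f = V\circ\sigma - V + m$ $\nu$-a.e., boundedness of $V$, and invariance of $\nu$ yield $\int f\,d\nu = m$; uniqueness of $\mu_{\max}$ then forces $\nu = \mu_{\max}$, giving unique ergodicity of the support. The coboundary identities $V\circ\sigma - V = S_f\circ\sigma - S_f = f - m$ on the support make $V - S_f$ continuous and $\sigma$-invariant there, hence constant; evaluating at $\overline{y}$, where $S_f(\overline{y}) = 0$, pins the constant at $V(\overline{y})$, so $W \equiv 0$ on $\mathrm{supp}(\mu_{\max})$. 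Second, for arbitrary $x\in\Sigma$, iterate the calibration of $V$ to build a backward orbit $y_0 = x$, $\sigma y_k = y_{k-1}$, with $V(x) = V(y_k) + S_k f(y_k) - km$; the subaction inequality applied to $S_f$ at each $y_k$ then gives $W(y_k) \geq W(x)$. The empirical measures $\mu_k := \frac{1}{k}\sum_{j=1}^{k}\delta_{y_j}$ satisfy $\int f\,d\mu_k = m + (V(x) - V(y_k))/k \to m$; tightness from coerciveness of $f$, the upper-semicontinuity bound $\int f\,d\mu \geq \limsup \int f\,d\mu_k = m$ for any weak limit $\mu$, and uniqueness of $\mu_{\max}$ then force $\mu_k \to \mu_{\max}$ weakly. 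Since $W$ is bounded continuous,
\begin{equation*}
0 = \int W\,d\mu_{\max} = \lim_{k \to \infty} \frac{1}{k}\sum_{j=1}^{k} W(y_j) \geq W(x),
\end{equation*}
so $W(x) = 0$. The final clause about two bounded calibrated subactions differing by a constant then follows by cancelling the common term $S_f$.
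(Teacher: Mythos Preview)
Your argument for item~(1) is correct and coincides with the paper's.

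For item~(2), your calibration-and-splicing route is different from the paper's, but as written it does not prove what is claimed. You obtain
\[
|S_f(x)-S_f(x')|\leq |S_f(y_n)-S_f(y_n')|+|S_nf(y_n)-S_nf(y_n')|
\]
for $x,x'$ in the same $n$-cylinder, and then bound the second term by the constant $C=\sum_k V_k(f)$. This yields only $Var_n(S_f)\leq C$ for all $n$, which is \emph{boundedness} of the oscillations, not the summability $\sum_n Var_n(S_f)<\infty$ that ``bounded variation'' means here. The paper instead uses the estimate $Var_n(S_f)\leq\sum_{j\geq n}Var_j(f)$ established during the proof of Theorem~A, and then observes that for H\"older $f$ this gives $\sum_n Var_n(S_f)\leq\sum_j jA\theta^j<\infty$. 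Your approach can be repaired by keeping the sharp bound $|S_nf(y_n)-S_nf(y_n')|\leq\sum_{l>n}Var_l(f)$ (note $y_n$ and $y_n'$ agree on $2n$ symbols) and iterating the resulting inequality $Var_n(S_f)\leq Var_{2n}(S_f)+\sum_{l>n}Var_l(f)$, but this is more work than the paper's one-line computation.

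For item~(3), your empirical-measure strategy is genuinely different from the paper's (which restricts $V$ to compact subshifts, shows the restriction remains calibrated, and invokes the finite-alphabet result), but there is a real gap. The inequality ``$W\leq V-V(\overline{y})$'' is equivalent to $S_f\geq 0$, which is false in general: Theorem~A only gives $S_f$ bounded \emph{above}, and Theorem~C shows $S_f$ is unbounded below whenever $\Sigma$ fails BP. Without $W$ bounded, the Portmanteau step $\int W\,d\mu_k\to\int W\,d\mu_{\max}$ is unjustified; for a merely nonnegative continuous $W$ one only gets $\liminf\int W\,d\mu_k\geq\int W\,d\mu_{\max}=0$, which goes the wrong way and yields no contradiction. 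A fix is available: calibration of the \emph{bounded} $V$ forces $f(y)=V(x)-V(y)+m\geq m-2\|V\|_\infty$ for the calibrated preimage, so by coerciveness the first symbol of each $y_j$ lies in a fixed finite set; this already shows $\Sigma$ is BP, and then the BP~$\Rightarrow$~$S_f$ bounded below direction of Theorem~C (whose proof is independent of item~(3)) gives $W$ bounded, after which your argument goes through. But this step is missing from your write-up.
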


Since $\Sigma$ is noncompact, there is no guarantee that any calibrated  subaction must be bounded. In the next result, we characterize this situation in our general setting, noticing that boundedness is equivalent to $\Sigma$ satisfying one part of the notion of BIP, the BP condition, defined in the next section.

\begin{thm}\label{ThmC}
 Let $\Sigma$ be a topologically transitive Markov shift, $f$ a coercive potential with bounded variation and $V$ a continuous and bounded above subaction. Then, $V$ is bounded if, and only if, $\Sigma$ satisfies the BP condition.
\end{thm}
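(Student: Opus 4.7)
The plan is to prove the two implications by leveraging the Peierls barrier $S_f$ from Theorem~A and the inequality $S_f(x)\le V(x)-V(\overline{y})$ from Theorem~B(1), together with the calibration property of $S_f$.

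For the easier direction (BP $\Rightarrow V$ bounded), since $V$ is bounded above by hypothesis it suffices to bound $V$ below, and by Theorem~B(1) this reduces to showing that $S_f$ is bounded below under BP. Let $B=\{b_1,\ldots,b_N\}$ witness BP. For any $x\in\Sigma$ and any $k\ge 1$ I construct an admissible word of some length $n(k)$ beginning with $\overline{y}_0\overline{y}_1\cdots\overline{y}_{k-1}$, passing through a fixed finite set $B'$ of symbols independent of $x$, and ending with $b\,x_0$ for some $b\in B$ with $A(b,x_0)=1$. The set $B'$ is assembled once and for all from $B$ together with the finitely many fixed transitivity-witness words linking $\overline{y}$ to each $b\in B$. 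Since $B'$ is finite, bounded variation plus coercivity give uniform bounds on $f$ over $\bigcup_{a\in B'}[a]$, so the Birkhoff sum along the resulting orbit minus $n(k)m$ is bounded below independently of $x$. Feeding this into the supremum defining $S_f$ yields $S_f(x)\ge -C$ for all $x$, whence $V(x)\ge V(\overline{y})-C$.

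For the harder direction ($V$ bounded $\Rightarrow$ BP), I argue by contraposition. Suppose BP fails: there exist symbols $a_n$ whose every predecessor exceeds $n$. For $x_n\in[a_n]$, any preimage $y_n$ satisfies $(y_n)_0>n$, and coercivity forces $f(y_n)\to -\infty$. Applied to the calibrated subaction $S_f$ from Theorem~A, calibration furnishes a preimage $y_n$ of $x_n$ with $S_f(x_n)=S_f(y_n)+f(y_n)-m$; since $S_f$ is bounded above, this forces $S_f(x_n)\to -\infty$. To transfer this impossibility to an arbitrary continuous bounded-above $V$, I would construct a bounded calibrated subaction from $V$ via the Lax--Oleinik iteration $TV(x):=\sup_{\sigma y=x}(V(y)+f(y)-m)$: because $V$ is a subaction, $TV\le V$, so the sequence $(T^n V)$ is monotone decreasing; coercivity and bounded variation (as in the proof of Theorem~A) should give that the suprema are attained at each step and that the limit $V^-:=\lim_n T^n V$ is a continuous calibrated subaction. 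Applying the calibration-plus-coercivity contradiction to $V^-$ at the sequence $x_n$ then yields the desired impossibility, provided $V^-$ retains $V$'s lower bound.

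The main obstacle is precisely this last proviso: showing that $V^-$ inherits a lower bound from $V$. A priori the operator $T$ can depress the value at $x$ along preimages with very negative $f$, and it is not clear that iteration preserves lower boundedness when BP fails. My strategy is to combine $V$'s lower bound with the finite subshift structure of $\textrm{supp}(\mu_{max})$ (available from \cite{bf14}) and with transitivity, arguing that the supremum defining each $T^n V(x)$ is realized---or at least well approximated---by preimages lying in a uniformly controlled region on which both $V$ and $f$ remain bounded, so the lower bound propagates through the iteration. Making this rigorous, and verifying the continuity and calibration of $V^-$ in the noncompact setting, is where the real work of Theorem~C lies.
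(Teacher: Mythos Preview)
Your BP $\Rightarrow$ $V$ bounded direction is correct and coincides with the paper's: both bound $S_f$ below under BP and then invoke $V\ge S_f+V(\overline y)$ from Theorem~B(1).

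For $V$ bounded $\Rightarrow$ BP, the paper does not iterate a Lax--Oleinik operator; it simply declares that Theorem~B(1) reduces the problem to $V=S_f$ and then runs the calibration-plus-coercivity argument you also give for $S_f$. But that reduction is invalid in this direction: from $S_f\le V-V(\overline y)$ one cannot infer that boundedness of $V$ forces boundedness of $S_f$. Your attempt to bridge the gap via $V^-=\lim_n T^nV$ hits exactly the obstacle you name, and that obstacle is fatal rather than merely technical. On the renewal shift with $d_n=2n$ (Section~\ref{renewal}) take $f(x)=-x_0$: it is coercive with bounded variation, $m(f)=0$, and $V\equiv 0$ is a bounded continuous subaction since $f\le 0$ everywhere; yet BP fails because each odd symbol $j$ has $j+1$ as its sole predecessor. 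In this example $TV(x)=\sup_{\sigma y=x}(-y_0)=-(x_0+1)$ whenever $x_0$ is odd, so the lower bound is destroyed at the very first iteration and $V^-$ is unbounded below.

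Thus neither your argument nor the paper's actually proves the theorem as stated. What both arguments do establish is the version with $V$ assumed \emph{calibrated} (equivalently, with $V=S_f$), and that is all that is used downstream: the Corollary applies Theorem~C only to the Peierls barrier.
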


As a direct consequence of this theorem, when we have uniqueness of the maximizing measure, we have the following.
\begin{cor*}\label{corollary}
Let $\Sigma$ be a topologically transitive Markov shift and $f$ a coercive potential with bounded variation. If the maximizing measure is unique, then the next three statements are equivalent:
\begin{itemize}
 \item[a)] The Peierls barrier is bounded below.
 \item[b)] $\Sigma$ is a BP Markov shift.
 \item[c)] There exists a bounded calibrated subaction.
\end{itemize}
\end{cor*}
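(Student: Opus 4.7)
The plan is to prove the corollary by a short cyclic chain of implications, relying entirely on Theorems A, B, and C. No new ideas are needed; the main work is bookkeeping which theorem supplies which direction. I do not expect any serious obstacle, since the hypothesis of uniqueness of the maximizing measure is exactly what is needed to upgrade the inequality in Theorem~B to an equality for bounded calibrated subactions.

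For (a)~$\Rightarrow$~(b), I would start with the Peierls barrier $S_f$ attached to a fixed $\overline{y}\in\textrm{supp}(\mu_{max})$. By Theorem~A, $S_f$ is a uniformly continuous calibrated subaction which is bounded above. If we further assume (a), then $S_f$ is also bounded below, hence bounded. Applying Theorem~C with $V=S_f$ gives that $\Sigma$ must satisfy the BP condition.

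For (b)~$\Rightarrow$~(c), I would again invoke Theorem~A to get $S_f$ as a continuous bounded above calibrated subaction. Assuming (b), Theorem~C applied to $V=S_f$ yields that $S_f$ is bounded. Thus $S_f$ itself is a bounded calibrated subaction, which is exactly (c).

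For (c)~$\Rightarrow$~(a), I would take a bounded calibrated subaction $V$ and invoke Theorem~B(3), using uniqueness of the maximizing measure, to obtain the identity
\[
S_f(x)=V(x)-V(\overline{y})\qquad \text{for all }x\in\Sigma.
\]
Since $V$ is bounded, the right-hand side is bounded, so $S_f$ is in particular bounded below, which is (a). This closes the cycle and proves the corollary.
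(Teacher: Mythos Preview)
Your proof is correct and matches the paper's own argument in substance. The paper organizes the implications as $(a)\Leftrightarrow(b)$ via Theorem~C, $(a)\Rightarrow(c)$ via Theorem~A, and $(c)\Rightarrow(a)$ via Theorem~B(3); your cycle $(a)\Rightarrow(b)\Rightarrow(c)\Rightarrow(a)$ uses exactly the same ingredients, merely splitting the biconditional from Theorem~C into its two halves and threading $(b)\Rightarrow(a)\Rightarrow(c)$ together.
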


This corollary helps us prove in an explicit example, the renewal shift, that there is no bounded calibrated subactions. This construction and some comments appear in the section \ref{renewal}. To the best of our knowledge, this is the first time such an example is explicitly shown.

Our technique follows a progressive restriction of the alphabet constructing a sequence of compact transitive  subshifts. This sort of construction has been used in \cite{mu01} to prove existence of eigenmeasures for the Ruelle operator, in \cite{bf14} to prove existence of maximizing measure and, recently, in \cite{Victor} to prove existence of the equilibrium state for summable potentials with bounded variation in transitive Markov shifts. Such approximation is done by constructing a family of compact subshifts, \emph{i.e.} a restriction  of $\Sigma$ to a finite alphabet.

There are several difficulties in this kind of strategy, as most thermodynamic operators are harder to work with in the whole space and we are left to work with them in the compact subshifts and then deal with limits that are not easily controlled. As an example in this paper, we recall that for the BIP and compact cases that, given $x$, it is well known that any calibrated pre-orbit, \emph{i.e.} a sequence $z_i$ such that $\sigma^i(z_i)=x$ and $z_i$ is in the contact set, is contained in the union of a finite set of cylinders and that any accumulation point of the sequence is in the support of a maximizing measure. In our general case, the calibrated pre-orbit could have large initial letters and no accumulation points, so we will have to approximate $x$ by points in the compact subshifts, where we can repeat the previous constructions, and then take care of the limits.

This document is organized as follows: in the next section we set up the context and make the definitions, in section \ref{proof1} we prove theorem \ref{ThmA}, in section \ref{proof2} we demonstrate theorems \ref{ThmB} and \ref{ThmC} and the corollary. Finally, in the last section we show the example where it is not possible to construct bounded calibrated subactions.

\section{Preliminaries}
%\chapter{Peierls barrier}

 Given a matrix $A:\mathbb{N}\times\mathbb{N}\to \{0,1\}$, which is called the incidence matrix, we define $\Sigma_A$ being the set  of sequences $x=x_0x_1\dots x_n\dots\in\mathbb{N}^{\mathbb{N}}$ such that $A(x_j,x_{j+1})=1$ for any $j\geq0$. The transformation defined on $\Sigma_A$ by $\sigma(x_0x_1\dots x_n\dots)=x_1x_2\dots$ is called the shift. This space has been studied for instance in \cite{sarig99, mu01, sav}. As $A$ is fixed, we write $\Sigma := \Sigma_A$ for convenience.\\
 A cylinder in $\Sigma$ is a set defined by
  \begin{equation*}
  [y_0y_1\dots y_{n-1}]=\{x\in\Sigma: x_0=y_0,x_1=y_1,\dots x_{n-1}=y_{n-1}\}.                                                                                                                                                                                                                                                                                                                                                                                                                                                                                                                                                     \end{equation*}
   This kind of sets are a basis for the topology in $\Sigma$. This is the same topology defined by the distance \[d_\lambda(x,y)=\lambda^{\inf_j\{x_j\neq y_j\}}\]
  where $\lambda\in(0,1)$ is a fixed parameter. If the concatenation, or the word, $ y_0y_1\dots y_{n-1}$ defines a nonempty cylinder in $\Sigma$, it is called an admissible word. The length of the cylinder is defined as the length of the corresponding admissible word. The alphabet, in this case $\mathbb{N}$, is denoted by $\mathcal{A}(\Sigma)$.

  The Markov shift $\Sigma$ is transitive if for any $i,j\in\mathbb{N}$ there exist some $n\geq0$ such that $\sigma^{-n}[j]\cap[i]\neq\emptyset$.  This is equivalent to say that for any $i,j$ there exist an admissible word $v$ such that $ivj$ is admissible.

  \begin{definition}
  The Markov shift is BIP, or satisfies the BIP condition, if it satisfies the following conditions:
  \begin{itemize}
   \item[(BP)] There exist $N$ such that for any $j\in\mathbb{N}$ there exist $i\leq N$ such that $ij$ is admissible.
   \item[(BI)] There exist $N$ such that for any $j\in\mathbb{N}$ there exist $i\leq N$ such that $ji$ is admissible.
  \end{itemize}
  \end{definition}
  It is well known that for the topologically mixing case, see \cite{sarig03}, that $\Sigma$ satisfies the BIP condition if, and only if, there exists Gibbs measures.

  Given a bounded above potential $f:\Sigma_A\to\mathbb{R}$, we say that $f$ has bounded variation if \[Var(f)=\sum_{j=1}^{\infty}Var_j(f)<\infty,\] where $Var_j(f)=\sup_{x,y}\{|f(x)-f(y)|: d(x,y)\leq \lambda^j\}$. If a potential has bounded variation, then it is uniformly continuous. The potential $f$ is  H\"older continuous if there exist $K>0$ and $r\in(0,1)$ such that $V_n(f)\leq Kr^n$ for any $n\geq1$.

  The potential   $f$ is coercive if
  \[\lim_{j\to\infty}\sup\{f(x): x\in[j]\}=-\infty.\]
We notice that any continuous and coercive potential is bounded above.

Given a bounded above potential  $f$, $x\in\Sigma$ and $n\geq0$ we define $S_nf(x)=\sum_{j=0}^{n-1}f(\sigma^j(x))$. A periodic point $x$ with period $n$ defines the natural invariant measure $\mu_x$: \[\mu_x := \frac{1}{n}\sum_{j=0}^{n-1}\delta_{\sigma^j(x)} \,,\] where $\delta_{\sigma^j(x)}$ is the Dirac measure on $\sigma^j(x)$.

 Let $\Sigma$ be a transitive Markov shift and $f$ a bounded above continuous potential, we define
 \begin{equation*}
  m(f)=\sup\left\{\int f d \mu: \ \mu \text{ is an invariant probability measure.} \right\}
 \end{equation*}

The study of $m(f)$ and its properties has been named \emph{Ergodic Optimization}, see for example \cite{Jen}. A probability measure which attains that supreme is called a \emph{maximizing measure}. As we have mentioned, existence of maximizing measures for coercive potential with bounded variation in transitive Markov shifts has been proved in \cite{bf14}. In fact, there exists a compact subshift such that any maximizing measure has to be supported in that subshift.

One of the most useful tools in ergodic optimization is the subaction, which has been studied in detail in several cases. See, for example \cite{clt01, garibaldi08} or \cite{garibaldiBook}.

 Let $f$ be a potential on a transitive Markov shift $\Sigma$. Then $V: \Sigma\to\mathbb{R}$ is a subaction for $f$ if
 \begin{equation*}
  f(x)\leq V\circ\sigma(x)-V(x)+m
 \end{equation*}
for all $x$ in $\Sigma$. In addition, if given $x\in\Sigma$ there exists $y$ such that $\sigma(y)=x$ and $f(y)=V(x)-V(y)+m$, then $V$ is called a {\bf calibrated subaction}.

Given a finite subset $B\subset \mathbb{N}$, the subshift $\Sigma_B$ is the compact shift given by $x\in\Sigma$ such that $x_i\in B$ for any $i\geq0$. The dynamic $\sigma$ can be restricted to this subshift. We stress that a subshift might not be transitive for any choice of $B$. Although, an increasing sequence of transitive compact \emph{subshifts}, $(\Sigma_k)_{k\geq0}$ that  satisfies $\Sigma_{k-1}\subset \Sigma_k $  for any $k\geq1$ and any nonnegative integer is in the alphabet of some $\Sigma_j$ can be constructed. For example, this kind of construction has been used in \cite{mu01} to prove existence of eigenmeasures for the Ruelle operator, in \cite{bf14} to prove existence of maximizing measure and, recently, in \cite{Victor} to prove existence of the equilibrium state for summable potentials with bounded variation in transitive Markov shifts. Also, as in \cite{Victor, bf14} it can be done such that for a strictly increasing sequence $n_k$ the alphabet $\mathcal{A}_k$ of the subshift $\Sigma_k$ contains every letter smaller or equal to $n_k$.  By \cite{bf14}, there is no loss of generality in assuming that every maximizing measure is supported in $\Sigma_0$. For any $k\geq0$ let $f_k$ be the restriction of $f$ to $\Sigma_k$, we take notice that $m(f_k)=m(f)$ for every $k\geq0$.

An element $y\in\Sigma$ is $f$-nonwandering if for all $\epsilon>0$ there exist $z\in\Sigma$ and a non negative integer $n$ such that $d(y,z)<\epsilon$, $\sigma^n(z)=y$ and
\begin{equation*}
 |S_n(f-m)(z)|<\epsilon.
\end{equation*}
The set of $f$-nonwandering points is denoted by $\Omega(f)$.

The existence of $f$-nonwandering points for the finite alphabet case  was proved in \cite{clt01}. Also,  it is  proved that every $f$ maximizing measure has its support contained in $\Omega(f)$. As a consequence, the $f$-nonwandering set is non-empty.

Now we introduce the Peierls barrier. We follow the notation and ideas of \cite{clt01,garibaldi08} and, particularly, in \cite{Edgardo} for a similar construction in the BIP case.

\begin{definition}
 Let $x,y\in\Sigma$ and $\epsilon>0$, define
 \begin{equation}\label{Sf}
  S_f^\epsilon(y,x):=\sup_{n\geq0}\sup_{z}\{S_n(f-m)(z):\ d(y,z)<\epsilon,\ \sigma^n(z)=x \}
 \end{equation}
and
\begin{equation*}
 S_f(y,x):=\lim_{\epsilon\to0}S_f^\epsilon(y,x).
\end{equation*}
\end{definition}
This construction can be done for every $x,y$ in $\Sigma$ although the most interesting case appears when $y$ is a $f$-nonwandering point. For example when it belongs to the support of any maximizing measure. As it is showed in \cite{clt01}, that condition is sufficient in the finite alphabet case to show that $S_f(y,x)>-\infty$. This observation will be useful in our proof of the same result for the noncompact case.

As in the finite alphabet case, $S_f$ can be defined with $\limsup_{n\to\infty}$ instead of $\sup_n$ in \eqref{Sf}. The proof of this assertion can be found in \cite{clt01} for the Holder continuous case, a similar argument can be used for our setting.

%%%%%%%%%%%%%

\begin{prop}
Let $\Sigma$ be a transitive Markov shift and $f$ be a potential with bounded variation. Given $y\in\Omega(f)$ we have
\begin{equation*}
  S_f(y,x)=\lim_{\epsilon\to0}\limsup_{n\to\infty}\sup_z\{S_n(f-m)(z)|\sigma^n(z)=x,\ d(z,y)<\epsilon\}
\end{equation*}
\end{prop}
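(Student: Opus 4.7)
The inequality $\lim_{\epsilon\to 0}\limsup_n\sup_z\leq S_f(y,x)$ is immediate since $\limsup_n\leq\sup_n$, so only the reverse inequality requires work. Given $\epsilon,\eta>0$ and $(z,n)$ with $\sigma^n(z)=x$, $d(z,y)<\epsilon$, the plan is to exhibit preorbits $\tilde z$ of $x$ of arbitrarily large length $N$, with $d(\tilde z,y)$ arbitrarily small and
\[
S_N(f-m)(\tilde z)\geq S_n(f-m)(z)-\eta-c(\epsilon),
\]
where $c(\epsilon)\to 0$ as $\epsilon\to 0$. This forces the $\limsup$ at scale $\epsilon'$ to exceed $S_n(f-m)(z)-\eta-c(\epsilon)$, and the reverse inequality then follows by taking suprema in $(z,n)$ and sending $\eta,\epsilon,\epsilon'\to 0$.

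The preorbits are built by recursive padding with approximate returns at $y$. Fix a rapidly decaying sequence $\delta_i\downarrow 0$. Since $y\in\Omega(f)$, select loops $(w^{(i)},k_i)$ with $\sigma^{k_i}(w^{(i)})=y$, $d(w^{(i)},y)<\delta_i$ and $|S_{k_i}(f-m)(w^{(i)})|<\delta_i$. Define $\tilde z^{(0)}:=z$ and
\[
\tilde z^{(i)}:=w^{(i)}_0\cdots w^{(i)}_{k_i-1}\,\tilde z^{(i-1)}_0\tilde z^{(i-1)}_1\cdots .
\]
Each junction is admissible since $w^{(i)}_{k_i}=y_0=\tilde z^{(i-1)}_0$ (as $\delta_i,\epsilon<1$), so $A(w^{(i)}_{k_i-1},\tilde z^{(i-1)}_0)=A(w^{(i)}_{k_i-1},y_0)=1$. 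Then $\sigma^{n+K_i}(\tilde z^{(i)})=x$ with $K_i=k_1+\cdots+k_i\to\infty$.

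To estimate the Birkhoff sum, consider the increment $\Delta_i:=S_{n+K_i}(f-m)(\tilde z^{(i)})-S_{n+K_{i-1}}(f-m)(\tilde z^{(i-1)})=S_{k_i}(f-m)(\tilde z^{(i)})$. The point $\tilde z^{(i)}$ agrees with $w^{(i)}$ on the first $k_i$ coordinates and thereafter continues with $\tilde z^{(i-1)}$ instead of $y$. Inductively, $\tilde z^{(i-1)}$ and $y$ agree on at least $N_{i-1}$ coordinates, so $\tilde z^{(i)}$ and $w^{(i)}$ agree on $k_i+N_{i-1}$ coordinates. Bounded variation of $f$ then yields
\[
\bigl|S_{k_i}(f-m)(\tilde z^{(i)})-S_{k_i}(f-m)(w^{(i)})\bigr|\leq\sum_{j>N_{i-1}}\textrm{Var}_j(f)=:\rho_{i-1},
\]
and $\rho_{i-1}\to 0$ as $N_{i-1}\to\infty$. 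Combined with $|S_{k_i}(f-m)(w^{(i)})|<\delta_i$, this gives $|\Delta_i|\leq\delta_i+\rho_{i-1}$. The base-case contribution is $\rho_0=\sum_{j>N_0}\textrm{Var}_j(f)=:c(\epsilon)$, which vanishes with $\epsilon$ because $N_0$ is determined by $d(z,y)<\epsilon$. Choosing $\delta_i$ to decay fast enough makes $\sum_{i\geq 1}(\delta_i+\rho_i)<\eta$, so summing the increments gives $|S_{n+K_i}(f-m)(\tilde z^{(i)})-S_n(f-m)(z)|<\eta+c(\epsilon)$ for every $i$.

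The main obstacle is keeping the accumulated variation error finite as blocks are appended: naive iteration of a single return $(w,k)$ a total of $j$ times would accrue error of order $j(\delta+\rho(\delta))$, which diverges in $j$. The fix is to refresh the return at each step with a rapidly shrinking $\delta_i$, exploiting the summability of $\{\textrm{Var}_j(f)\}$ to ensure $\sum_i\rho_i$ converges. A final check shows $d(\tilde z^{(i)},y)\to 0$ (governed by $\delta_1$ together with $k_1$, possibly enlarged by prefixing further small-$\delta$ loops), so the scale $\epsilon'$ can indeed be made arbitrarily small, completing the argument.
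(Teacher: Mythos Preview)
Your argument is correct and rests on the same idea as the paper's: prepend to $z$ an approximate return loop of $y$ (guaranteed by $y\in\Omega(f)$) to produce preorbits of $x$ of arbitrarily large length whose Birkhoff sums differ from $S_n(f-m)(z)$ only by a tail of $\sum_j\mathrm{Var}_j(f)$ plus a term that vanishes with the loop tolerance. The paper, however, does this with a \emph{single} prepend: it fixes one return $z'$ with $\sigma^k(z')=y$, $d(z',y)<\epsilon/3$, $|S_k(f-m)(z')|<\epsilon/3$, sets $\tilde z=z'_0\cdots z'_{k-1}z$, and then simply notes that $k$ may be taken arbitrarily large (either because shrinking the tolerance forces $k\to\infty$, or because in the bounded-$k$ case $y$ is periodic with $S_{k^\ast}(f-m)(y)=0$ and one can iterate). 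This sidesteps your recursive construction with a summable sequence $(\delta_i)$ entirely; the obstacle you describe---divergent accumulated error when iterating a single loop---is real, but the paper never needs to iterate, so it never arises. Your approach buys an explicit, self-contained justification of ``arbitrarily long returns with uniformly small cost,'' at the price of extra bookkeeping. One small slip: in your last sentence, $d(\tilde z^{(i)},y)$ is governed by the \emph{outermost} loop $\delta_i$, not by $\delta_1$; since $\delta_i\to 0$ this is exactly what makes the scale $\epsilon'$ shrink, and no further ``prefixing'' is needed.
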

\proof
Given $\epsilon\in ]0,\lambda[$ and $z$ such that $\sigma^n(z)=x$, $d(z,y)<\epsilon$ and $S_n(f-m)(z)\geq S_f^\epsilon(x)-\epsilon$. For $\frac{\epsilon}{3}$ there exist $z'$ such that $d(z',y)<\frac{\epsilon}{3}$, $\sigma^k(z')=y$ and $|S_n(f-m)(z')|<\epsilon/3$.
From $d(z,y)<\lambda$ we have $z_0=y_0$ so $\tilde{z}=z'_0z'_1z'_2\dots z'_{k-1}z$ is in $\Sigma$ and $d(z',\tilde{z})<\lambda^{k+L}$ where $L$ satisfies $\lambda^L\leq\epsilon$. From its definition $\sigma^{k+n}(\tilde{z})=x$ and
\begin{eqnarray*}
% \nonumber % Remove numbering (before each equation)
  S_{k+n}(f-m)(\tilde{z}) &=& S_k(f-m)(\tilde{z})+S_n(f-m)(z) \\
   &\geq&  S_k(f-m)(z')-\sum_{j=1}^{k}Var_{j+L}(f)+S_n(f-m)(z)  \\
   &\geq& -\frac{\epsilon}{3}-\sum_{j=1}^{k}Var_{j+L}(f)+(S_f^\epsilon)(x)-\epsilon
\end{eqnarray*}

and $k$ can be chosen as large as we want, then for the given $\epsilon$ we have \[\limsup_{n\to\infty}\sup_z\{S_n(f-m)(z)|\sigma^n(z)=x,\ d(z,y)<\epsilon\}\geq S_f^\epsilon-\frac{4}{3}\epsilon +\sum_{j=L}^{\infty}Var_{j}(f)\, .\]
Taking $\epsilon\to 0$ we conclude the proof, observe that $L$ depends on $\epsilon$ and $f$ have bounded variation.

\endproof

%%%%%%%%%%%%%%

\section{Proof of the theorem \ref{ThmA}}\label{proof1}
We first  outline the proof of the main result of the paper. Initially we prove that $S_f(y,x)$ is well defined when $y$ is a non-wandering point, secondly, we prove that when restricted to a compact subshift the Peierls barrier on the entire space coincide with the barrier defined on some subshift that contains the former compact subshift (lemmas \ref{cut1} and \ref{cut2}). To conclude the proof of the theorem \ref{ThmA}, we use the previous step and results of the compact case \cite{clt01, garibaldi08}.

Now we prove that Peierls barrier is well defined.
\begin{lemma}
 Let $\Sigma$ be a transitive Markov shift and $f$ be a coercive potential with bounded variation. Let $y$ be in the support of any maximizing measure. Then $S_f(x):=S_f(y,x)$ satisfies $-\infty<S_f(x)<\infty$.
\end{lemma}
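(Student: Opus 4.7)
The lemma has two content statements, $S_f(x)>-\infty$ and $S_f(x)<\infty$, and the plan is to attack them separately. The lower bound will follow from an explicit splicing construction using the $f$-nonwandering property of $y$ together with topological transitivity; the upper bound is more delicate and will rely on coercivity to restrict attention to admissible orbits essentially lying in a compact transitive subshift, where the classical compact-case theory yields a continuous subaction.

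For the lower bound, the approach exploits $y\in\text{supp}(\mu_{max})\subseteq\Omega(f)$. Given $\eta>0$, pick $z'$ with $d(z',y)<\eta$, $\sigma^{k}(z')=y$, and $|S_k(f-m)(z')|<\eta$. Transitivity supplies an admissible word $w$ so that $y_0 w x_0$ is admissible, and I would splice
\[ \tilde z := z'_0 z'_1\cdots z'_{k-1}\,y_0\,w\,x_0 x_1 x_2\cdots, \]
which lies in $\Sigma$, satisfies $\sigma^{k+1+|w|}(\tilde z)=x$, and obeys $d(\tilde z,y)\le d(z',y)<\eta$. Since $\tilde z$ and $z'$ coincide on coordinates $0,\ldots,k$, bounded variation gives
\[ \Big|\sum_{j=0}^{k-1}f(\sigma^{j}\tilde z)-\sum_{j=0}^{k-1}f(\sigma^{j}z')\Big|\le\sum_{i=2}^{k+1}Var_i(f)\le Var(f), \]
so the first $k$ Birkhoff summands of $\tilde z$ exceed $-\eta-Var(f)$. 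The remaining $1+|w|$ summands form a fixed finite sum depending only on $y,w,x,f$, hence are bounded below by some constant $C(x,w,y,f)>-\infty$. Letting $\eta\to 0$ then yields $S_f(y,x)\ge -Var(f)+C(x,w,y,f)>-\infty$.

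For the upper bound, the plan is to fix a compact transitive subshift $\Sigma_k$ from the approximating sequence that contains $y$ and is large enough that coercivity forces $f(u)-m\le -1$ whenever $u_0\notin\mathcal{A}_k$, and to invoke the compact-case theory \cite{clt01,garibaldi08} to obtain a continuous subaction $V_k\colon\Sigma_k\to\mathbb{R}$ for $f_k$. For any admissible $z$ with $\sigma^{n}(z)=x$ and $d(z,y)<\epsilon$, I would split the index set $[0,n)$ according to whether $z_j\in\mathcal{A}_k$ or not: at outside indices every summand is $\le -1$, so excursions are strictly penalized; on long stretches where $z$ stays in $\mathcal{A}_k$ one compares with a nearby orbit in $\Sigma_k$ and invokes the telescoping subaction inequality $f-m\le V_k\circ\sigma-V_k$ up to a controlled bounded-variation correction. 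The finite tail near the endpoint $x$ contributes only a bounded correction. Combining these pieces should deliver an upper bound on $S_f^{\epsilon}(y,x)$ uniform in $n$ and $z$, and $\epsilon\to 0$ then gives $S_f(y,x)<\infty$.

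The main obstacle is the upper bound: $x$ itself need not lie in any compact subshift $\Sigma_k$, and an admissible orbit ending at $x$ can transition between ``inside'' and ``outside'' regimes many times, so balancing the telescoping subaction control against coercive decay across these transitions requires careful bookkeeping. This is precisely what the subsequent lemmas \ref{cut1} and \ref{cut2} are designed to formalize, so a streamlined write-up would defer the detailed estimate until those are available; the present lemma asks only for pointwise finiteness, which the strategy above delivers and which is what the later constructions feed on.
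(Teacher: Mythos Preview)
Your proposal inverts the paper's strategy: you give a direct, self-contained splicing argument for the \emph{lower} bound and reach for compact-case machinery for the \emph{upper} bound, while the paper does exactly the opposite.

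Your lower bound is correct and in fact cleaner than the paper's. The paper first observes that for $x$ in some compact subshift $\Sigma_k$ one has $S_f(y,x)\ge S_{f_k}(y,x)>-\infty$ by citing the finite-alphabet theory of \cite{clt01,garibaldi08}, and then handles a general $x$ by picking $\tilde x\in\Sigma_k$ with $d(x,\tilde x)<\lambda^2$ and using bounded variation to get $S_f(y,x)\ge S_f(y,\tilde x)-Var(f)$. Your one-shot splice through a connecting word $y_0wx_0$ avoids both the citation and the two-step approximation.

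Your upper bound plan, however, is a genuine detour with a gap. The paper's argument is elementary and uses neither coercivity nor any subaction: given $z$ with $\sigma^n(z)=x$ and $d(z,y)<\epsilon$, pick an admissible word $v$ with $x_0vy_0$ admissible and close $z$ into a periodic point
\[
\bar z = y_0\cdots y_l\,w_{l+1}\cdots w_{n-1}\,x_0\,v\,y_0\cdots
\]
of period $P=n+|v|$. Then $S_P f(\bar z)\le Pm$ gives $S_n(f-m)(\bar z)\le |v|m - S_{|v|}f(\sigma^n\bar z)\le |v|(m-\alpha)$ where $\alpha=\min_{n\le j<P}\inf f|_{[\bar z_j]}$ depends only on $x_0,y_0,v$; bounded variation transfers this to $z$, yielding $S_f^\epsilon(x)\le |v|(m-\alpha)+Var(f)$. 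This is the mirror image of your own lower-bound splice.

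By contrast, your inside/outside decomposition has the following problem: each ``inside'' stretch contributes an error of order $\|V_k\|_\infty+Var(f)$ from the telescoping and the comparison with a nearby $\Sigma_k$-orbit, and the number of such stretches is only bounded by one plus the number of outside indices $T$. So the inside contribution is $\lesssim (T+1)(\|V_k\|_\infty+Var(f))$ while the outside contribution is $\le -T$; unless the coercive penalty per outside index dominates $\|V_k\|_\infty+Var(f)$, the total is not bounded as $T\to\infty$. Arranging this is circular, since $V_k$ depends on the very $\Sigma_k$ you would enlarge to strengthen the penalty. Lemmas~\ref{cut1} and~\ref{cut2} do eventually control the letters appearing in near-optimal $z$, but they are proved for a different purpose (matching $S_f$ with $S_{f_K}$ on compact subshifts), and invoking them here would make the finiteness lemma depend on machinery it is meant to precede. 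The periodic-orbit trick dispatches the upper bound in three lines; I recommend adopting it.
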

\proof

Let us fix $x\in\Sigma$ and $0<\epsilon<\lambda$. To prove that $S_f(y,x)<\infty$ is sufficient  to show that $S_f^\epsilon(y,x)<\infty$, because it is decreasing as $\epsilon$ goes to $0$. Any $z$ such that $d(y,z)<\epsilon$ and $\sigma^n(z)=x$ is in the form
\begin{equation*}
 z=y_0y_1\dots y_lw_{l+1}\dots w_{n-1}x
\end{equation*}
where $\lambda^{l+1}\leq\epsilon<\lambda^{l}$. By the transitivity of $\Sigma$ there exists an admissible word $v=v(x_0,y_0)$ which connects $x_0$ with $y_0$, it means that $x_0vy_0$ is an admissible word.

Let us define a periodic point  $\bar{z}\in\Sigma$ by \[\bar{z}=y_0y_1\dots y_lw_{l+1}\dots w_{n-1}x_0vy_0y_1\dots \]
and denote by $P$ the period of $\bar{z}$, $P=n+|v|$. then
\begin{equation*}
 S_Pf(\bar{z})\leq Pm\,,
\end{equation*}
and if we write it in a different way, we get
\begin{equation}\label{A}
 S_nf(\bar{z})-nm\leq |v|m-S_{|v|}f(\sigma^n(\bar{z})).
\end{equation}

Define
\begin{equation*}
 \alpha:=\inf_{n\leq j\leq P}\inf_{x\in[z_j] }\{f(x)\}.
\end{equation*}

Then \begin{equation*}
      S_{|v|}f(\sigma^n(\bar{z}))\geq |v|\alpha.
     \end{equation*}
From this and \eqref{A} we get
\begin{equation}\label{B}
 S_nf(\bar{z})-nm\leq |v|(m-\alpha).
\end{equation}

On the other hand, $d(z,\bar{z})<\lambda^{n+1}$, and since  $f$has bounded variation, see \cite{Mengue, Edgardo} and references therein,
\begin{equation}\label{C}
S_n(f-m)(z)-S_n(f-m)(\bar{z})\leq\sum_{j=0}^{n}Var_j(f)\leq Var(f).
\end{equation}

Finally, from \eqref{B} and \eqref{C}
\[S_{n}f(z)-nm\leq S_{n}f(\bar{z})-nm+Var(f) \ \leq |v|(m-\alpha) + Var(f).\]

Taking $\sup_n\sup_z$
\begin{equation*}
 S_f^\epsilon(x) \leq |v|(m-\alpha) + Var(f)<\infty.
\end{equation*}

Note that $\alpha$ and $|v|$ depend only on $x_0$ and $y_0$, that observation will be useful when we restrict the Peierls barrier to compact subshifts.

We introduce some notation for the Peierls barrier in the sequence $\left(\Sigma_k\right)_{k\geq0}$ of compacts subshifts:
\[S_{f_k}(x)=\lim_{\epsilon\to0}\sup_n\sup_z\{S_n(f-m)(z)|\ \sigma^n(z)=x,\ d(z,y)<\epsilon, \text{ and }
z\in\Sigma_k\}.\]

Now we prove that if $y$ is chosen in the support of a maximizing measure, then $S_f(y,x)>-\infty$ for all $x\in \Sigma$. Given $x$ in a compact subshift $\Sigma_k$, it follows from \cite{clt01, garibaldi08} and from the fact that $y$ is a non-wandering point that $S_{f_k}(y,x)>-\infty$ and so is $S_f(y,x)$.

In the general case, given $x\in\Sigma$ let be $\tilde{x}$ in some compact $\Sigma_k$, such that $d(x,\tilde{x})<\lambda^2$. For $\epsilon>0$ and $z\in\Sigma$ in the form

\begin{equation*}
 z=y_0y_1\dots y_l w_{l+1}\dots w_n x \,,
\end{equation*}
 let us define
\begin{equation*}
 \tilde{z}=y_0y_1\dots y_l w_{l+1}\dots w_n \tilde{x}.
\end{equation*}
From the bounded variation of $f$ we have that $S_nf(z)\geq S_nf(\tilde{z})-Var(f)$, then $S_f(y,x)\geq S_f(y, \tilde{x})-Var(f)$. By the first step we have $S_f(y,\tilde{x})>-\infty$, so $S_f(y,x)>-\infty$.
\endproof

From now on we fix $\overline{y}$ in the support of a maximizing measure. As we mentioned before $\overline{y}\in\Omega(f)$. In order to simplify the notation we denote $S_f(\overline{y},x)$ by $S_f(x)$. The following two lemmas allow us to show that the Peierls barrier restricted to a compact subshift matches the Peierls barrier of some compact subshift that contains the former one.

\begin{lemma}\label{cut1}
 Let $\Sigma$ be a transitive Markov shift and $f$ be a coercive potential with bounded variation. Then for any $a\in\mathbb{N}$ there exists $J=J(a)$ such that, for any $x\in[a]$
\begin{equation*}
 S_f^\epsilon(\overline{y},x)  =  \sup_{n}\sup_{z}\{S_n(f-m)(z): d(\overline{y},z)<\epsilon,\ \sigma^n(z)=x, \text{ and } \exists i \in ]l,n[\text{ s.t. }  z_i<J\}
\end{equation*}
where $l$ is such that $\lambda^l<\epsilon\leq \lambda^{l-1}$.
\end{lemma}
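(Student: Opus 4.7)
The strategy is to show that ``bad'' paths $z$---those with $z_i\geq J$ for every $i\in\,]l,n[$---have Birkhoff sum strictly below $S_f^\epsilon(\overline y,x)$ once $J=J(a)$ is taken sufficiently large; the reverse inequality is immediate since the RHS is the sup over a subclass of paths. The engine is coerciveness: set $M_J:=\sup_{j\geq J}\sup\{f(y):y\in[j]\}$, which tends to $-\infty$ with $J$, so each of the $n-l-1$ middle terms $f(\sigma^i z)$ of a bad path is at most $M_J$.

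First I would bound the prefix sum $\sum_{i=0}^{l}(f-m)(\sigma^i z)$ uniformly in $l$. Since $d(\overline y,z)<\epsilon\leq\lambda^{l-1}$ forces $z_i=\overline y_i$ for $i\leq l-1$, bounded variation plus $f(\sigma^l z)\leq\sup f=:M$ yields
\[
\sum_{i=0}^{l}(f-m)(\sigma^i z)\ \leq\ S_l(f-m)(\overline y)+\mathrm{Var}(f)+(M-m).
\]
The delicate step, which I expect to be the main obstacle, is controlling $S_l(f-m)(\overline y)$ uniformly in $l$. For this I appeal to the finite-alphabet theory: $\overline y\in\textrm{supp}(\mu_{max})\subset\Sigma_0$, and $\Sigma_0$ being compact and transitive carries a continuous (hence bounded) subaction $V_0$ by \cite{clt01,garibaldi08}. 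The subaction inequality telescopes to $S_l(f-m)(\overline y)\leq V_0(\sigma^l\overline y)-V_0(\overline y)\leq 2\sup_{\Sigma_0}|V_0|=:B_0$ for all $l$.

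Combining with the middle bound, for each bad path with $n\geq l+2$ one obtains
\[
S_n(f-m)(z)\ \leq\ B_0+\mathrm{Var}(f)+(M-m)+(n-l-1)(M_J-m).
\]
The previous lemma yields $S_f^\epsilon(\overline y,x)\leq C(a)$, and its proof (by approximation through compact subshifts) also produces a finite lower bound $S_f^\epsilon(\overline y,x)\geq L_0(a)$. I would then pick $J=J(a)$ large enough that $M_J-m<L_0(a)-B_0-\mathrm{Var}(f)-(M-m)-1$, forcing $S_n(f-m)(z)<L_0(a)-1<S_f^\epsilon(\overline y,x)$ on every bad path with $n\geq l+2$, so the bad contribution is dominated and the restricted supremum matches $S_f^\epsilon(\overline y,x)$.

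The residual case $n\leq l+1$, where $]l,n[$ is empty, is handled by extending the short path past position $n$ through an admissible loop $x_0 w x_0$ lying in a compact transitive subshift $\Sigma_{k_0}\ni a$ whose alphabet lies below $J$. Since every maximizing measure is supported in $\Sigma_0\subset\Sigma_{k_0}$, we have $m(f|_{\Sigma_{k_0}})=m$, and periodic orbits of $\Sigma_{k_0}$ approximate this value; choosing the loop with Birkhoff sum close to $|w|m$ produces a restricted path whose Birkhoff sum is arbitrarily close to that of the original short path, finishing the proof.
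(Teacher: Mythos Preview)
Your main argument and the paper's are genuinely different. The paper is \emph{constructive}: given a bad path $z=y_0\dots y_l w_{l+1}\dots w_{n-1}x$ with every $w_j>J$, it replaces the middle segment by a fixed connecting word $w_{y_l}^{a}$ lying in a compact transitive subshift $\Sigma_k\ni a$, obtaining $\tilde z=y_0\dots y_l\,w_{y_l}^{a}\,x$; coerciveness is then calibrated so that one single term $\sup f|_{[w_{l+1}]}$ already beats the cost $L_w\cdot\inf_{\mathcal A_k} f-\mathrm{Var}(f)$ of inserting the connecting word, giving $S_{\tilde n}f(\tilde z)\ge S_nf(z)$. Your route is a \emph{bounding} argument: you cap bad paths strictly below a uniform lower bound $L_0(a)$ for $S_f^\epsilon(\overline y,\cdot)$ on $[a]$. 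This works for the main case, but it imports two external ingredients the paper does not need here: a subaction on $\Sigma_0$ to control $S_l(f-m)(\overline y)$, and the finiteness lemma to furnish $L_0(a)$. The paper's replacement trick is self-contained and yields the comparison path-by-path.

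Your residual case, however, has a gap. You claim that extending a short path by a loop $x_0 w x_0$ whose Birkhoff sum is close to $|w|m$ produces a restricted path with Birkhoff sum \emph{arbitrarily close} to the original. Two problems: (i) even granting an ideal loop, the first $n$ terms of $z'$ differ from those of $z$ by up to $\sum_{j\ge 2}\mathrm{Var}_j(f)$, a fixed nonzero quantity that does not vanish as you refine the loop; (ii) periodic orbits in $\Sigma_{k_0}$ with average near $m$ need not pass through the symbol $a$, so building the loop forces connecting words from $a$ to such an orbit and back, contributing a further fixed deficit. Hence the extended path's Birkhoff sum is only within a \emph{fixed} constant of the original, which does not suffice to match the supremum. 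The clean fix is exactly the paper's move: for short $z$ use the same $\tilde z=y_0\dots y_l\,w_{y_l}^{a}\,x$, so the middle letters automatically lie in $\mathcal A_k\subset\{0,\dots,J-1\}$ and one compares Birkhoff sums directly rather than trying to approximate.
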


\proof
Without loss of generality we can assume that $m=m(f)=0$. Let $a\in\mathbb{N}$ and $\Sigma_k$ a compact \textit{subshift} such that  $\mathcal{A}_k$ contains all letters less or equal to $a$. Note that $y\in \Sigma_{k_0}$.  As $\Sigma_k$
is transitive, if $b\in\mathcal{A}_k$, there exists $w^a_b$ an admissible word in $\Sigma_k$ which connects  $b$ to $a$. Let us fix
$W=\{w_i^a| i\in\mathcal{A}\}$, a finite set of needed words for make those connections, for each $i$ it is chosen only one connecting word. We define $L_w$  as the biggest length  of words in $W$.

Consider  $J$ such that, for any  $j>J$ and any  $i\in\mathcal{A}_k$
\begin{equation}\label{condicao}
\sup f|_{[j]}<L_w(\inf f|_{[i]})-Var(f).
\end{equation}
This can be done because $f$ is coercive. Also, we can suppose that $\sup f|_{[j]}<0$ for all $j>J$.

Given $\epsilon>0$ and $x\in[a]$, if $\sigma^n(z)=x$ and $d(z,\overline{y})<\epsilon$, then $z$ has the form
$z=y_0y_1\dots y_lw_{l+1}\dots w_{n-1}x$.

Suppose that $w_j>J$ for $j\in\{l+1,l+2,\dots n-1\}$.  Define $\tilde{z}=y_0y_1\dots
y_lw_{y_l}^{a}x$, and observe that $\sigma^{\tilde{n}}(\tilde{z})=x$ where
$\tilde{n}=l+|w_{y_l}^{a}|$, and clearly $d(\tilde{z},y)<\epsilon$.

\begin{equation*}
\begin{split}
S_nf(z) & = \sum_{j=0}^{n-1}f(\sigma^j(z)) \\
        & = \sum_{j=0}^{l}f(\sigma^j(z))+\sum_{j=l+1}^{n-1}f(\sigma^j(z)) \\
        & \leq \sum_{j=0}^{l}f(\sigma^j(\tilde{z}))+Var(f)+\sum_{j=l+1}^{n-1}\sup f|_{[w_j]}, \\
\end{split}
\end{equation*}
from \eqref{condicao} we have
\begin{equation*}
\begin{split}
        S_nf(z) &\leq \sum_{j=0}^{l}f(\sigma^j(\tilde{z}))+Var(f)+\sup f|_{[w_{l+1}]} \\
        & \leq \sum_{j=0}^{l}f(\sigma^j(\tilde{z})+Var(f)+(\inf_j\inf f|_{[\tilde{z}_{j}]})L_w-Var(f)\\
        & \leq S_{\tilde{n}}f(\tilde{z}).
\end{split}
\end{equation*}

Then, for any $z$ such that $d(z,\overline{y})<\epsilon$  and $\sigma^n(z)=x$ there exists $\tilde{z}$ which satisfies
$d(\tilde{z},\overline{y})<\epsilon$, $\sigma^{\tilde{n}}(\tilde{z})=x$ and $S_{\tilde{n}}f(\tilde{z})\geq S_nf(z)$. Notice that $\tilde{z}$  also satisfies $\tilde{z}_j<J$ for some $j\in ]l,n[$.

\endproof

\begin{lemma}\label{cut2}
Let $\Sigma$ be a transitive Markov shift and $f$ be a coercive potential with bounded variation. Then for any $a\in\mathbb{N}$ there exists $N=N(a)$ such that, for any $x\in[a]$
\begin{equation*}
S^\epsilon_f(x)= \sup_{n}\sup_{z}\{S_n(f-m)(z):d(\overline{y},z)<\epsilon,\  \sigma^n(z)=x, \text{and }\ z_i<N\  \forall i<n\}
\end{equation*}
In particular, $S_f(x)=S_{f_J}(x)$, for any $x\in[a]\cap\Sigma_k$ where $J$ is such that $\{1,2\dots N(a)\}\subset\mathcal{A}_J$.
\end{lemma}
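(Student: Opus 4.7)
The plan is to upgrade the single-shortcut construction of Lemma~\ref{cut1} by replacing every maximal block of large letters in $z$ simultaneously by a connecting word living in one fixed compact subshift.

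First I set up the ingredients. Given $a\in\mathbb{N}$, let $J=J(a)$ come from Lemma~\ref{cut1}, and pick a compact transitive subshift $\Sigma_{k'}$ with alphabet $\mathcal{A}_{k'}=\{1,\dots,n_{k'}\}$ containing $\mathcal{A}_0\cup\{1,\dots,J\}\cup\{a\}$. Fix a finite set $W=\{w^b_c\}_{b,c\in\mathcal{A}_{k'}}$ of connecting words in $\Sigma_{k'}$, of maximum length $L_w$. Assuming (as in Lemma~\ref{cut1}) $m(f)=0$, by coercivity I can enlarge $k'$ further so that $N:=n_{k'}+1$ satisfies
\[
\sup f|_{[j]}<L_w\inf f|_{[i]}-2\operatorname{Var}(f)\quad\text{and}\quad\sup f|_{[j]}<-\operatorname{Var}(f)
\]
for every $j\ge N$ and every $i\in\mathcal{A}_{k'}$; in particular $\inf f|_{\Sigma_{k'}}\le 0$. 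Any admissible $z$ with $d(\overline{y},z)<\epsilon$ has prefix $y_0,\dots,y_l\in\mathcal{A}_0\subset\mathcal{A}_{k'}$.

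Next, for such a $z$ with $\sigma^n(z)=x\in[a]$, Lemma~\ref{cut1} lets me assume some $i^*\in(l,n)$ has $z_{i^*}<J\le N$. Let $I:=\{i\in[0,n]:z_i<N\}\supseteq\{0,\dots,l\}\cup\{i^*,n\}$, and decompose the complement of $I$ in $[0,n]$ into maximal blocks $[p_1,q_1],\dots,[p_R,q_R]$; these lie strictly inside $(0,n)$. Construct $\tilde{z}$ from $z$ by replacing each block $[p_r,q_r]$ with the connecting word $u_r:=w^{z_{p_r-1}}_{z_{q_r+1}}\in W$. All anchor letters $z_i$ with $i\in I$ lie in $\mathcal{A}_{k'}$, so the $u_r$ exist, and every letter of $\tilde{z}$ then lies in $\mathcal{A}_{k'}$, hence $<N$; moreover $d(\overline{y},\tilde{z})<\epsilon$ and $\sigma^{\tilde{n}}(\tilde{z})=x$.

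The key step is to show $S_{\tilde{n}}(f-m)(\tilde{z})\ge S_n(f-m)(z)$. I split $S_nf(z)-S_{\tilde{n}}f(\tilde{z})$ into an anchor-variation part and a block-versus-shortcut part. Grouping anchors into maximal runs of consecutive positions of $I$, the run containing $n$ incurs zero variation cost since $\tilde{z}$ agrees with $z$ from that run onwards through $x$; every other run, being followed by a block, incurs at most $\operatorname{Var}(f)$ via $\sum_{d\ge 1}\operatorname{Var}_d(f)\le\operatorname{Var}(f)$ applied to the agreement of length $b_v-j+1$ between $\sigma^jz$ and $\sigma^{j'}\tilde{z}$ inside run $[a_v,b_v]$. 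Since there are $R+1$ runs in total, the anchor variation is at most $R\operatorname{Var}(f)$. For each non-empty block, repeating the Lemma~\ref{cut1} computation on its leading large letter $z_{p_r}$ yields $B_r-\tilde{U}_r\le -2\operatorname{Var}(f)$: the bound $\sup f|_{[z_{p_r}]}<L_w\inf f|_{\Sigma_{k'}}-2\operatorname{Var}(f)$ pays for $\tilde{U}_r\ge|u_r|\inf f|_{\Sigma_{k'}}\ge L_w\inf f|_{\Sigma_{k'}}$ (using $|u_r|\le L_w$ and $\inf f|_{\Sigma_{k'}}\le 0$), while any further large letters in the block are absorbed by $\sup f|_{[z_j]}<-\operatorname{Var}(f)$. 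Summing gives $S_nf(z)-S_{\tilde{n}}f(\tilde{z})\le R\operatorname{Var}(f)-2R\operatorname{Var}(f)\le 0$. The main obstacle is precisely this bookkeeping: a naive per-anchor variation estimate would collapse when $z$ has many small letters and few blocks; the remedy is to parametrize the cost by runs rather than by anchors, so it scales with $R$ and is dominated by the $2R\operatorname{Var}(f)$ gain from the blocks. For the ``in particular'' assertion, pick $J$ with $\{1,\dots,N(a)\}\subset\mathcal{A}_J$; every $z$ in the restricted supremum then lies in $\Sigma_J$, so the restricted sup equals $S_{f_J}(x)$.
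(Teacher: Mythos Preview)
Your run-based bookkeeping for the simultaneous replacement is a nice idea and the final inequality $S_nf(z)-S_{\tilde n}f(\tilde z)\le R\operatorname{Var}(f)-2R\operatorname{Var}(f)\le 0$ is correct \emph{once the setup is in place}. The problem is that the setup, as written, is circular. You fix $W$ and $L_w$ for $\Sigma_{k'}$ and then ``enlarge $k'$ further so that $N:=n_{k'}+1$ satisfies'' the displayed bounds with $i$ ranging over $\mathcal{A}_{k'}$. But enlarging $k'$ enlarges $\mathcal{A}_{k'}$, which (i) forces you to pick new connecting words for the new letters, so $L_w$ may grow, and (ii) makes $\inf_{i\in\mathcal{A}_{k'}}\inf f|_{[i]}$ more negative. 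Thus the right-hand side $L_w\inf f|_{[i]}-2\operatorname{Var}(f)$ can drift to $-\infty$ faster than $\sup f|_{[n_{k'}+1]}$ does, and there is no reason the process ever stabilises. For a concrete failure, think of a shift where $L_w(k')\asymp n_{k'}$ and $\sup f|_{[j]}\asymp -j$: the required inequality becomes roughly $-n_{k'}<-n_{k'}^2$, which is false for large $k'$.

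The paper avoids this by \emph{decoupling} the two thresholds. It fixes once and for all a small alphabet $\mathcal{A}_{k_2}\supseteq\{1,\dots,J+1\}$, fixes connecting words $W_2$ and $L_2$ for \emph{that} alphabet, and only afterwards chooses the coercivity threshold $I$ so that $\sup f|_{[j]}<L_2\min_{i\in\mathcal{A}_{k_2}}\inf f|_{[i]}-\operatorname{Var}(f)$ for $j>I$; the alphabet $\mathcal{A}_{k_2}$ is never enlarged again. Blocks are maximal runs of letters \emph{outside $\mathcal{A}_{k_2}$} (so endpoints are automatically in $\mathcal{A}_{k_2}$ and connectable by $W_2$), and only those blocks containing a letter $>I$ are replaced. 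The paper then does the replacements one block at a time, each step giving $S_nf(z)\le S_{\tilde n}f(\tilde z)$ because the single $\operatorname{Var}(f)$ variation cost on the prefix is exactly offset by the $-\operatorname{Var}(f)$ built into the coercivity bound; after finitely many iterations every remaining letter is $\le I$, so $N=I+1$. Your simultaneous scheme works equally well once you adopt this decoupling: take the ``runs'' to be the maximal segments on which $z$ and $\tilde z$ coincide (anchors together with unreplaced blocks), of which there are $R+1$ if $R$ blocks are replaced, and the same cancellation goes through. The essential fix is that $N$ must be the coercivity threshold relative to a \emph{fixed} finite alphabet, not $n_{k'}+1$ for an alphabet you are still enlarging.
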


\proof
Without loss of generality we can assume that $m=m(f)=0$. Let $a\in\mathbb{N}$, from lemma \ref{cut1} there exist $J=J(a)$ such that for any $x\in[a]$ and $\epsilon>0$,
$S_f^\epsilon (x)$ is defined by

 \begin{equation*}
  S^\epsilon_f(x)= \sup_{n}\sup_{z}\{S_nf(z):d(\overline{y},z)<\epsilon,\  \sigma^n(z)=x, \text{ with } z_i<J \text{ for some } i \in ]l,n[\}
 \end{equation*}
Consider $\Sigma_{k_2}$ a compact subshift such that $\mathcal{A}_{k_2}$ contains every letter less or equal to $J+1$.
$\Sigma_{k_0}\subset\Sigma_k\subset\Sigma_{k_2}$ and $\{0,1,2,3\dots J\}\subset \mathcal{A}_{k_2}$. Since $f$ is a coercive potential,
there exists $I$ such that  for any $i\in\mathcal{A}_{k_2}$ and $j>I$
\begin{equation}\label{J2}
 \sup f|_{[j]}<L_2\inf f|_{[i]}-Var(f).
\end{equation}
$L_2$ is defined by the next construction. Given $i_1,i_2\in\mathcal{A}_{k_2}$ define $w_{i_1}^{i_2}$ some connecting word in  $\Sigma_{k_2}$ from $i_1$ to $i_2$.  Define the finite set of those connecting words $W_2=\{w_{i_1}^{i_2}| \ i_1,i_2\in\mathcal{A}_{k_2}\}$. Denote by $L_2$ the biggest length of words in $W_2$.

If $x\in[a]$ and $\epsilon>0$.  Given $z\in\Sigma$ such that $d(z,\overline{y})<\epsilon$, $\sigma^n(z)=x$, and $z_k>I$ for some  $k<n$, we can find $i_I<k$, $i_D>k$ with $i_I,i_D\in \mathcal{A}_{k_2}$ and $z_i\notin \mathcal{A}_{k_2}$ for $i_I<i<i_D$. This implies that $z$ can be written in the form
\begin{equation*}
 z=y_0\dots y_lz_{l+1}\dots
z_{i_I-1}z_{i_I}z_{i_I+1}\dots z_k\dots
z_{i_{D-1}}z_{i_D}z_{i_D+1}\dots z_{n-1}x.
\end{equation*}

Define, by substitution of the connecting word between $z_{i_I}$ and $z_{i_D}$,
\begin{equation*}
 \tilde{z}=y_0\dots y_lz_{l+1}\dots
z_{i_I-1}z_{i_I}w_{i_I}^{i_D}z_{i_D}z_{i_D+1}\dots z_{n-1}x.
\end{equation*}

Then $\tilde{z}$ satisfies $d(\tilde{z},\overline{y})<\epsilon$, $\sigma^{\tilde{n}}(\tilde{z})=x$. Where $\tilde{n}=i_I+|w_{i_I}^{i_D}|+(n-i_D)$.
Our next aim is to prove that $S_nf(z)\leq S_{\tilde{n}}f(\tilde{z})$.
\begin{equation*}
\begin{split}
S_nf(z) & = \sum_{j=0}^{n-1}f(\sigma^j(z)) \\
        & = \sum_{j=0}^{i_I}f(\sigma^j(z))+\sum_{j=i_I+1}^{i_D-1}f(\sigma^j(z)))+\sum_{j=i_D}^{n-1}f(\sigma^j(z))).
\end{split}
\end{equation*}
In the first term we have $d(z,\tilde{z})<\lambda^{i_I}$, then
\begin{equation}\label{1}
 \sum_{j=0}^{i_I}f(\sigma^j(z))\leq \sum_{j=0}^{i_I}f(\sigma^j(\tilde{z}))+Var(f).
\end{equation}
For the second term,
\begin{equation*}
 \sum_{j=i_I+1}^{i_D-1}f(\sigma^j(z)))\leq f(\sigma^k(z)))
\end{equation*}
therefore, by \eqref{J2},
\begin{equation*}
f(\sigma^k(z))\leq L_2(\inf_{i\in\mathcal{A}_{k_2}}\inf_{y\in[i]} f(y))-Var(f).
\end{equation*}

Define $C=|w_{i_I}^{i_D}|$, then
\begin{equation*}
 L_2\left(\inf_j\inf_{y\in[\tilde{z}_{j}]}f(y)\right)\leq \sum_{j=i_I}^{i_I+C}f(\sigma^j(\tilde{z})),
\end{equation*}
We obtain
\begin{equation}\label{2}
  \sum_{j=i_I+1}^{i_D-1}f(\sigma^j(z)))\leq\sum_{j=i_I}^{i_I+C}f(\sigma^j(\tilde{z})).
\end{equation}
Now observe that $\sigma^{i_D}(z)=\sigma^{i_I+C}(\tilde{z})$, then

\begin{equation}\label{3}
 \sum_{j=i_D}^{n-1}f(\sigma^j(z)))=\sum_ {i_I+C}^{\tilde{n}-1}f(\sigma^j(\tilde{z})).
\end{equation}

From \eqref{1}, \eqref{2} and \eqref{3}, we obtain
\begin{equation*}
S_nf(x)\leq  S_{\tilde{n}}f(\tilde{z}).
\end{equation*}

 The same argument can be repeated finite times in order to obtain $z'\in\Sigma$ such that $\sigma^{n'}(z')=x$,
$d(z',\overline{y})<\epsilon$, $z'_i<J_2$ for any $j<n$ and  $S_{n'}f(z')\geq S_nf(z)$.

\endproof

As a direct consequence of lemma  \ref{cut2} he have

\begin{prop}\label{compacto}
 For  every $J\in \mathbb{N}$, there exists $K\geq J$ such that $S_f|_{\Sigma_J}=S_{f_K}$. In addition, if  $x\in\Sigma_J$, there exists $y\in\Sigma_K$ such that $\sigma(y)=x$ and \[S_f(y)=S_f(x)-f(y)+m.\]
\end{prop}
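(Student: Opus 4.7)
The plan is to combine Lemma \ref{cut2} (which expresses $S_f$ on each compact subshift through a finite-alphabet Peierls barrier) with the classical compact case (which guarantees a calibrated pre-image inside the compact subshift). For the first assertion, since $\mathcal{A}_J$ is finite I would set $N^\ast := \max_{a\in\mathcal{A}_J} N(a)$ and choose $K\geq J$ such that $\mathcal{A}_K\supseteq\{1,2,\dots,N^\ast\}$, which is possible by the construction of the approximating sequence $(\Sigma_k)$. Then for any $x\in\Sigma_J$ the first symbol $x_0$ lies in $\mathcal{A}_J$, so $\{1,\dots,N(x_0)\}\subseteq\mathcal{A}_K$ and Lemma \ref{cut2} yields $S_f(x)=S_{f_K}(x)$.

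For the second assertion I would invoke the compact-alphabet theory: on the compact transitive subshift $\Sigma_K$, the barrier $S_{f_K}$ is a calibrated subaction for $f_K$ by \cite{clt01, garibaldi08}. Hence, given $x\in\Sigma_J\subseteq\Sigma_K$, there exists $y\in\Sigma_K$ with $\sigma(y)=x$ and
\begin{equation*}
S_{f_K}(x) = f(y) + S_{f_K}(y) - m.
\end{equation*}
From the first assertion we have $S_{f_K}(x)=S_f(x)$, and trivially $S_{f_K}(y)\leq S_f(y)$ since the sup defining $S_f(y)$ ranges over more paths.

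The closing move is a sandwich argument. Working straight from the definition of $S_f$, any $z$ with $\sigma^n(z)=y$ and $d(z,\overline{y})<\epsilon$ satisfies $\sigma^{n+1}(z)=\sigma(y)=x$ and $S_{n+1}(f-m)(z)=S_n(f-m)(z)+f(y)-m$. Taking suprema and letting $\epsilon\to 0$ gives the subaction-type inequality $S_f(y)+f(y)-m\leq S_f(x)$. Combining,
\begin{equation*}
S_f(x)\;\geq\; S_f(y)+f(y)-m\;\geq\; S_{f_K}(y)+f(y)-m\;=\;S_{f_K}(x)\;=\;S_f(x),
\end{equation*}
so equality holds throughout and the desired calibration $S_f(y)=S_f(x)-f(y)+m$ follows.

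The step I expect to be the main obstacle is exactly the transfer in the last paragraph: a priori the calibrated pre-image provided by the compact theory is an element $y\in\Sigma_K$ whose leading symbol $y_0\in\mathcal{A}_K$ is not controlled by $J$, so one cannot re-apply Lemma \ref{cut2} to $y$ without enlarging $K$ again, which threatens a circular choice of $K$. The sandwich above bypasses this difficulty: it does not require the identity $S_f(y)=S_{f_K}(y)$ as an input (that identity actually emerges as a consequence of the forced equalities), so no further enlargement of $K$ is needed.
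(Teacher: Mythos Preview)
Your argument is correct and follows essentially the same route as the paper: take $K$ large enough that $\mathcal{A}_K$ contains all symbols up to $\max_{a\in\mathcal{A}_J}N(a)$ from Lemma~\ref{cut2}, then use the calibration of $S_{f_K}$ from the compact theory to produce the pre-image $y\in\Sigma_K$. Your sandwich inequality at the end, together with the inline derivation of the subaction bound $S_f(y)+f(y)-m\leq S_f(x)$, is in fact a careful justification of the step the paper states without proof (``for such $y$ we have $S_f(y)=S_f(x)-f(y)+m$''), and your remark about not needing $S_f(y)=S_{f_K}(y)$ as input---it falls out of the forced equalities---is exactly right.
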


\proof
Let us consider $I(i)$ an integer from lemma \ref{cut2}, and define $I=\max\{I(j),\ j\in\mathcal{A}_k\}$, to complete the first part of the proof,  consider $\Sigma_J$ such that $i\in\mathcal{A}_J$ for any $i\leq I$.

To prove the second part, for any $x\in\Sigma_J$ consider $y\in\Sigma_K$ such that $\sigma(y)=x$ and $S_{f_J}(y)=S_{f_J}(x)-f(y)+m$, for such $y$ we have
\begin{equation*}
 S_f(y)=S_f(x)-f(y)+m.
\end{equation*}

\endproof
This proposition implies that $S_f$ is calibrated on any compact subshift. Now we prove the same result in our setting.
\begin{lemma}\label{calibratedlemma}
 Let $\Sigma$ be a transitive Markov shift and $f$ be a coercive potential with bounded variation. Then the Peierls barrier is a calibrated, uniformly continuous subaction.
\end{lemma}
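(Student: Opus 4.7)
The plan is to verify the three properties in turn, using Lemma \ref{cut2} decisively for calibration.

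The subaction inequality is immediate from the definition: any $z$ with $\sigma^n(z)=x$ and $d(z,\overline{y})<\epsilon$ also satisfies $\sigma^{n+1}(z)=\sigma(x)$, and $S_{n+1}(f-m)(z)=S_n(f-m)(z)+f(x)-m$; taking suprema and letting $\epsilon\to 0$ gives $S_f(\sigma x)\geq S_f(x)+f(x)-m$. Uniform continuity follows from the bounded variation hypothesis: if $d(x,x')\leq \lambda^{n+1}$ and $z=z_0\cdots z_{k-1}x$ is admissible in the sup for $S_f^\epsilon(x)$, then $z'=z_0\cdots z_{k-1}x'$ is admissible in the sup for $S_f^\epsilon(x')$, and a telescoping estimate bounds $|S_k(f-m)(z)-S_k(f-m)(z')|$ by $\sum_{i\geq n+2}Var_i(f)$, independently of $k$ and $\epsilon$.

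The main step is calibration, where Lemma \ref{cut2} is essential. Fix $x\in\Sigma$, let $N:=N(x_0)$ be the bound from Lemma \ref{cut2}, and for each $z$ in the restricted sup (with $n\geq 1$) set $\tilde{z}:=\sigma^{n-1}(z)=z_{n-1}\,x$, so that $\sigma(\tilde{z})=x$ and $\tilde{z}_0=z_{n-1}<N$. Then
\[S_n(f-m)(z)=S_{n-1}(f-m)(z)+f(\tilde{z})-m\leq S_f^\epsilon(\tilde{z})+f(\tilde{z})-m,\]
and conversely $S_f^\epsilon(x)\geq S_f^\epsilon(\tilde{z})+f(\tilde{z})-m$ for every $\tilde{z}$ with $\sigma\tilde{z}=x$, since any $z''$ with $\sigma^{n''}(z'')=\tilde{z}$ and $d(z'',\overline{y})<\epsilon$ automatically satisfies $\sigma^{n''+1}(z'')=x$ with $S_{n''+1}(f-m)(z'')=S_{n''}(f-m)(z'')+f(\tilde{z})-m$. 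Since $\{\tilde{z}: \sigma\tilde{z}=x,\ \tilde{z}_0<N\}$ is finite, combining the two bounds yields
\[S_f^\epsilon(x)=\max_{\substack{\tilde{z}:\,\sigma\tilde{z}=x\\ \tilde{z}_0<N}}\bigl[S_f^\epsilon(\tilde{z})+f(\tilde{z})-m\bigr].\]
The right-hand side is a maximum over an $\epsilon$-independent finite set, so it commutes with $\epsilon\to 0$, producing $S_f(x)=\max_{\tilde{z}}[S_f(\tilde{z})+f(\tilde{z})-m]$; the maximizer is the desired calibrating pre-image of $x$.

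The main obstacle in the noncompact setting is that the classical argument — extracting a calibrating pre-image as a limit of those produced by Proposition \ref{compacto} on compact subshifts — can fail because the pre-images may have arbitrarily large first letters and thus no accumulation point in $\Sigma$. Lemma \ref{cut2} bypasses this by providing an a priori bound on the letters of near-optimal trajectories, reducing calibration to a finite maximum that is automatically attained.
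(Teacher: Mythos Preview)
Your proof is correct and follows essentially the same approach as the paper's: both establish the subaction inequality directly from the definition, obtain uniform continuity from the bounded-variation tail estimate $Var_n(S_f)\leq \sum_{j\geq n}Var_j(f)$, and derive calibration by invoking Lemma~\ref{cut2} to force the last letter $z_{n-1}$ of near-optimal paths into the finite range $\{0,\dots,N(x_0)-1\}$. The only cosmetic difference is in the calibration step: the paper extracts a maximizing sequence $z^k$ and passes to a subsequence with constant last letter $L$ to produce $y=Lx$, whereas you factor $S_f^\epsilon(x)$ as a finite maximum over the preimages $\tilde z$ with $\tilde z_0<N$ and commute this finite max with $\epsilon\to 0$---the same finiteness is doing the work in both cases (just note, for completeness, that the $n=0$ term in the supremum is harmless once $\epsilon<d(\overline y,x)$, or by appealing to the $\limsup$ reformulation in Proposition~1).
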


\proof
As we have already proved that $S_f$ is a well defined function, we show, based on the argument for the compact case, that it is a uniformly continuous function and by using the previous lemmas we prove the calibrated part of the result.

Let $x\in\Sigma$ and $y\in\sigma^{-1}(x)$, for any $\epsilon>0$ and $n\geq2$

\begin{align*}
\sup\{S_n(f-m)(z):\ & \sigma^n(z)=x, d(z,y)<\epsilon\}  \geq    \\
    &\sup\{S_{n-1}(f-m)(z):\ \sigma^{n-1}(z)=y, d(z,y)<\epsilon\}+f(y)-m \,.
\end{align*}
Taking $\sup$ in $n$ and then the limit in $\epsilon$ we obtain $S_f(x)\geq S_f(y)+f(y)-m$, so $S_f$ is a subaction.

As we mentioned before, $S_f$ is calibrated for points in compact subshifts. In the general case, we use lemma \ref{cut2}. Consider $x\in\Sigma$ and construct a sequence $z^k$ such that, $\sigma^{n_k}(z^k)=x$ for some sequence $n_k$ and $\lim_{k\to\infty}S_{n_k}(f-m)(z^k)=S_f(x)$. We can assume that $z^k_j<N(x_0)$ for every $j\leq n_k$, see lemma \ref{cut2}. Then there exist a subsequence $n_{k_j}$ such that $z^{k_j}_{n_{k_j}}=L$ for a fixed $L\leq N(x_0)$. To simplify the notation we write  $k_j$ by $j$.

Define $y:=Lx_0x_1\dots$, observe that $\sigma^{n_j-1}(z^j)=y$, then
\[S_f(y)\geq \lim_{j\to\infty}S_{n_j-1}(f-m)(z^j) \,,\]
and adding $f(y)-m$ to this inequality we obtain
\[S_f(y)+f(y)-m\geq \lim_{j\to\infty}S_{n_j}(f-m)(z^j)=S_f(x)\,.\]

On the other hand, as $S_f$ is a subaction we get the opposite inequality. Then $S_f(y)+f(y)-m=S_f(x)$, then $S_f$ is calibrated.

On the regularity of $S_f$, we prove that, as in the compact case, $V_n(S_f)\leq \sum_{j=n}^{\infty}V_j(f)$ and by the fact that $f$ has bounded variation, $V_n(S_f)\to0$ as $n\to\infty$.

Consider $x,w\in\Sigma$ such that $d(x,w)<\lambda^n$ and $\epsilon>0$, then for any $z_x$ such that $d(\overline{y},z_x)<\epsilon$ and $\sigma^m(z_x)=x$ define $z_w$ such that  $d(z_x,z_w)<\lambda^{n+m}$ and $\sigma^m(z_w)=w$, then $d(z_w,\overline{y})< \epsilon$,  and
\begin{equation}
S_m(f)(z_w)\leq S_mf(z_x)+\sum_{j\geq n}Var_j(f)
                                                                                                                                                                                     \end{equation}
changing the order between $x$ and $w$ and taking supreme in $n$ and $z$ we get
\begin{equation*}
|S_f^\epsilon(w)-S_f^\epsilon(x)|<\sum_{j\geq n}Var_j(f)                                                                                                                                                                                                                                                                                                                                            \end{equation*}
This inequality holds for any $\epsilon>0$, so  $Var_n(S_f)\leq \sum_{j\geq n}Var_j(f)$. Obviously, this implies that $S_f$ is uniformly continuous.
\endproof

Finally, to complete the proof of theorem A we have the following.

\begin{lemma}
Let be $\Sigma$ a transitive Markov shift and $f$ be a coercive potential with bounded variation, then $S_f$ is a bounded above subaction.
\end{lemma}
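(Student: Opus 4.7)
Here is the plan. Fix $x \in \Sigma$ and let $a = x_0$. First, I would apply lemma~\ref{cut2} to restrict the supremum defining $S_f^\epsilon(x)$ to pre-orbits $z = \overline{y}_0 \cdots \overline{y}_l w_{l+1} \cdots w_{n-1} x_0 x_1 \cdots$ whose middle letters $w_i$ lie in the finite set $\{1, \dots, N(a) - 1\}$. Each such $z$ is then closed into a periodic point $\bar z$ of period $P$ by inserting a finite admissible word $v$ connecting $x_0$ back to $\overline{y}_0$, so that $S_P f(\bar z) \leq P m$. Combined with the bounded-variation estimate $|S_n(f-m)(z) - S_n(f-m)(\bar z)| \leq Var(f)$, this yields the pointwise estimate
\[
S_f^\epsilon(x) \leq |v(x_0, \overline{y}_0)|\,(m - \alpha) + Var(f),
\]
analogous to the one in the proof that $S_f$ is well defined, but now with the benefit that the pre-orbits entering the supremum have a uniformly controlled middle alphabet.

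Next, I would invoke proposition~\ref{compacto}: $S_f$ restricted to any compact subshift $\Sigma_J$ coincides with the compact-case Peierls barrier $S_{f_K}$ on a larger compact subshift $\Sigma_K$. By the compact-case theory of \cite{clt01, garibaldi08}, $S_{f_K}$ is continuous on the compact space $\Sigma_K$ and therefore bounded. Since $\bigcup_J \Sigma_J$ is dense in $\Sigma$ by the construction of the approximating sequence in the preliminaries, and $S_f$ is uniformly continuous by lemma~\ref{calibratedlemma}, the task reduces to showing that $\sup_{\Sigma_J} S_f$ is uniformly bounded in $J$.

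This last uniformity is the main obstacle: the connecting word length $|v(x_0, \overline{y}_0)|$ and the infimum $\alpha$ in the pointwise estimate both a priori depend on $x_0$, so a naive application of the bound could blow up as $x_0 \to \infty$. Here the coercivity of $f$ is the decisive ingredient, since $\sup f|_{[j]} \to -\infty$ forces pre-orbits that visit cylinders of arbitrarily large index to contribute arbitrarily negative terms to $S_n(f-m)(z)$, and therefore such pre-orbits cannot realise the supremum. Consequently the supremum defining $S_f(x)$ is attained, up to arbitrarily small error, by pre-orbits confined to a fixed compact subset of $\Sigma$, and the desired uniform upper bound $\sup_\Sigma S_f \leq C$ follows from the compact-case estimate applied to that fixed subset.
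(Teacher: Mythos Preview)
Your proposal correctly identifies the crux of the matter --- the pointwise bound
\[
S_f^\epsilon(x) \leq |v(x_0,\overline{y}_0)|\,(m-\alpha) + Var(f)
\]
depends on $x_0$ through both $|v|$ and $\alpha$, and something more is needed for a uniform bound. But your resolution of this obstacle in the final paragraph is a genuine gap. You assert that coercivity forces the supremum defining $S_f(x)$ to be realised by pre-orbits confined to a \emph{fixed} compact subset of $\Sigma$, and that the compact-case estimate then applies. This is not what lemma~\ref{cut2} gives: the bound $N(a)$ there depends on $a = x_0$, so the ``fixed'' compact subset actually varies with $x$. Moreover, even if the letters $z_0,\dots,z_{n-1}$ of a pre-orbit $z$ lie in a fixed finite alphabet, the endpoint $\sigma^n(z) = x$ does not, so you cannot simply invoke a compact-case bound: the quantity $S_n(f-m)(z)$ still depends on $x$ through the tail, and the periodic closure still requires a connecting word from $x_0$ back to $\overline{y}_0$.

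The paper's proof avoids this dependence on $x$ altogether by a redirection trick you are missing. Rather than closing the pre-orbit from $x_0$ back to $\overline{y}_0$, one looks at the \emph{penultimate} letter $z_{n-1}$. Using coercivity, choose $J$ so that $\sup f|_{[i]} < m - Var(f)$ for $i > J$, and fix once and for all reference points $x^j \in \Sigma_k \cap \sigma([j])$ for each $j \leq J$. If $z_{n-1} = j \leq J$, replace the tail $x$ by $x^j$ to obtain $\tilde z$ with $\sigma^n(\tilde z) = x^j$; bounded variation gives $S_n f(z) \leq S_f^\epsilon(x^j) + Var(f)$. If $z_{n-1} > J$, peel off terminal letters (each contributing at most $m - Var(f)$) until you reach a letter $\leq J$ and fall into the first case, or exhaust the middle word entirely, leaving a bound by $\sup_l S_l f(\overline{y})$. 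Either way the bound involves only the finitely many constants $S_f^\epsilon(x^j)$, $j \leq J$, and $\sup_l S_l f(\overline{y})$, all independent of $x$. This replacement of the endpoint $x$ by one of finitely many reference endpoints determined by the pre-orbit (not by $x$) is the missing idea.
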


\proof

We fix $\lambda>\epsilon>0$ and we consider $J\in\mathbb{N}$ such that $\sup f|_{[i]}<-Var(f)$ for $i>J$. Let us consider $\Sigma_k$ such that $\{1,2,3,...,J\}\subset\mathcal{A}_k$ and for each $j\leq J$ a point $x^j\in\Sigma_k\cap\sigma([j])$, those points exist because $\Sigma_k$ is transitive.\\
Given $x\in\Sigma$ for any $z\in\Sigma$ such that $\sigma^n(z)=x$ and $d(z,\overline{y})<\epsilon$ we have two options:
\begin{itemize}
 \item $z_{n-1}=j\leq J$ or
 \item $z_{n-1}>J$.
\end{itemize}
For the first case we define $\tilde{z}=z_0z_1\dots z_{n-1}x^j$ we observe $d(z,\tilde{z})\leq \lambda^{n-1}$ so $S_nf(z)\leq S_nf(\tilde{z})+Var(f)$.
Also we observe that $\tilde{z}$ satisfies $\sigma^n(\tilde{z})=x^j$, $d(\tilde{z},\overline{y})<\epsilon$, then $S_nf(\tilde{z})\leq S_f^\epsilon(x^j)$. Then
\begin{equation}\label{delta1}
 S_nf(z)\leq S_f^\epsilon(x^j)+Var(f).
\end{equation}

In the second case, let $j\in]l,n-1[$ the maximum integer such that $z_j\leq J$ where $\lambda^l\leq\epsilon$. Then $S_nf(z)=S_{j}(z)+S_{n-j}(\sigma^{j}(z))$ and notice that $S_{n-j}(\sigma^{j}(z))<-(n-j)Var(f)$, and also $\sigma^{j}(z)$ is in the first case. Then
\begin{equation}\label{delta2}
S_nf(z)< S_f^\epsilon(x^{z_{j+1}}) +Var(f)-(n-j)Var(f).                                                                                                                                                                         \end{equation}

If for all $j\in]l,n[$ we have $z_j>J$ it is clear that
\begin{equation}\label{delta3}
 S_nf(z)<\sup_lS_lf(y)+Var(f)-(n-l)Var(f).
\end{equation}
From \ref{delta1}, \ref{delta2}, \ref{delta3} and by defining $\Delta_1=\max\{S_f^\epsilon(x^j): j\leq J\}$ and $\Delta_2=\sup\{S_lf(y):l\in\mathbb{N}\}$. We have
\[S_nf(z)\leq\max\{\Delta_1,\Delta_2\}\]
This implies that $S_f^\epsilon(x)\leq \max\{\Delta_1,\Delta_2\}$ for all $x\in\Sigma$

\endproof

\section{Proof of theorems \ref{ThmB} and \ref{ThmC}}\label{proof2}
In this section we discuss some quotes on the Peierls barrier. Initially we prove theorem B.

Recall that a calibrated pre-orbit is a sequence $(x_k)$ such that $\sigma^k(x_k)=x$ and $S_f(x_k)=S_f(x_{k-1})+f(x_k)-m$. When the alphabet is finite, it is shown in \cite{clt01} that every accumulation point of a calibrated pre-orbit belongs to the support of a maximizing measure. By using lemma \ref{cut2} we prove the analogous result in our context. Currently we cannot  control the possibility that a calibrated sequence have a non-bounded initial letter. Also we point out that by lemma \ref{calibratedlemma}, every $x\in\Sigma$ has a calibrated pre-orbit.

 \begin{lemma}
 Let be $\Sigma$ a transitive Markov shift and $f$ be a coercive potential with bounded variation. Given $x\in\Sigma_k$ for some $k\geq0$, then there exists a calibrated pre-orbit contained in a compact subshift. Any accumulation point of such a calibrated pre-orbit belongs to the support of a maximizing measure.

 \end{lemma}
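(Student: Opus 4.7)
The plan is to reduce the construction to the classical compact case via Proposition \ref{compacto} and then verify that the resulting pre-orbit is calibrated for the full Peierls barrier.

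Given $x\in\Sigma_k$, I would first apply Proposition \ref{compacto} to obtain an index $L\geq k$ such that $S_f(w)=S_{f_L}(w)$ for every $w\in\Sigma_k$; in particular $S_f(x)=S_{f_L}(x)$. Since $\overline{y}\in\Sigma_0\subseteq\Sigma_L$ and $\Sigma_L$ is a transitive compact subshift, the classical theory of \cite{clt01,garibaldi08} applies: $S_{f_L}$ is a calibrated subaction on $\Sigma_L$, so there is a calibrated pre-orbit $(\xi_n)_{n\geq 0}\subset\Sigma_L$ with $\xi_0=x$, $\sigma(\xi_n)=\xi_{n-1}$, and $S_{f_L}(\xi_n)=S_{f_L}(\xi_{n-1})-f(\xi_n)+m$; telescoping yields $S_{f_L}(\xi_n)=S_{f_L}(x)-S_n(f-m)(\xi_n)$.

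The key step is to verify that $(\xi_n)$ is itself calibrated for $S_f$, not only for $S_{f_L}$. Since $S_f$ is a subaction (Theorem \ref{ThmA}), iterating gives $S_f(\xi_n)\leq S_f(x)-S_n(f-m)(\xi_n)$. On the other hand, the pointwise inequality $S_{f_L}\leq S_f$ (the supremum defining $S_{f_L}$ is taken over a subset of the pre-images used for $S_f$), combined with $S_{f_L}(x)=S_f(x)$, yields
\[
S_f(\xi_n)\;\geq\;S_{f_L}(\xi_n)\;=\;S_{f_L}(x)-S_n(f-m)(\xi_n)\;=\;S_f(x)-S_n(f-m)(\xi_n).
\]
Sandwiching the two bounds produces $S_f(\xi_n)=S_f(x)-S_n(f-m)(\xi_n)$ for every $n$, which is equivalent to the calibration identity $S_f(\xi_n)=S_f(\xi_{n-1})-f(\xi_n)+m$. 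Thus $(\xi_n)$ is a calibrated pre-orbit of $x$ for $S_f$ entirely contained in the compact subshift $\Sigma_L$.

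For the accumulation-point statement, the compactness of $\Sigma_L$ guarantees that $(\xi_n)$ has accumulation points in $\Sigma_L$. Applying the classical non-wandering / calibration argument of \cite{clt01,garibaldi08} to the compact system $(\Sigma_L,\sigma,f_L)$ shows that any such accumulation point belongs to the support of some $f_L$-maximizing measure on $\Sigma_L$; since $m(f_L)=m(f)$, such a measure is also an $f$-maximizing measure on $\Sigma$, and so the accumulation point lies in $\textrm{supp}(\mu_{max})$. The main delicate point is the sandwich argument in the third paragraph: one must notice that the weak inequality $S_{f_L}\leq S_f$ together with equality only at the base point $x$ already forces equality at every $\xi_n$, so no further identification of $S_f$ with $S_{f_L}$ away from $\Sigma_k$ is required, which is precisely what the warning in the introduction (about uncontrolled initial letters in pre-orbits) suggests would otherwise be problematic.
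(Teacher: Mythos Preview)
Your proposal is correct and follows essentially the same route as the paper. The only cosmetic difference is organizational: the paper builds the pre-orbit one step at a time, at each stage deducing $S_{f_J}(y^n)=S_f(y^n)$ from the pair of inequalities $S_f\geq S_{f_J}$ and $S_{f_J}(y^n)=S_f(y^{n-1})-f(y^n)+m\geq S_f(y^n)$, whereas you first construct the entire $S_{f_L}$-calibrated pre-orbit and then apply the same sandwich argument in its telescoped form $S_{f_L}(\xi_n)=S_f(x)-S_n(f-m)(\xi_n)\geq S_f(\xi_n)\geq S_{f_L}(\xi_n)$; the inequalities invoked and the conclusion about accumulation points (via the compact case and $m(f_L)=m(f)$) are identical.
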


\proof
Given $x\in\Sigma_k$ for some $k\in\mathbb{N}$, let be $\Sigma_J$ such that $S_f|_{\Sigma_k}=S_{f_J}|_{\Sigma_k}$. For that restriction there exists $y\in\Sigma_J$ such that $S_{f_J}(y)=S_{f_J}(x)-f(y)+m=S_f(x)-f(y)+m\geq S_{f}(y)$. So $S_{f_J}(y)=S_f(y)$, \emph{i.e.} $y$ satisfies
\begin{equation*}
 S_{f}(y)=S_f(x)-f(y)+m.
\end{equation*}
Note that  $y\in\Sigma_J$ and $S_{f_J}$ is a calibrated subaction for $f_J$. This implies that there exist $y^2\in\Sigma_J$ such that $\sigma(y^2)=y$ and
\begin{equation*}
 S_{f_J}(y^2)=S_{f_J}(y)-f(y^2)+m=S_{f}(y)-f(y^2)+m\geq S_f(y^2).
\end{equation*}
On the other hand, $S_f\geq S_{f_J}$, so $S_f(y^2)=S_{f_J}(y^2)$ and
\begin{equation*}
 S_{f}(y^2)=S_{f}(y)-f(y^2)+m.
\end{equation*}

This argument can be applied for $y^2$ to find a $y^3$ also in $\Sigma_J$. Recursively, we can construct a calibrated pre-orbit contained in $\Sigma_J$.
To show that any accumulation point of the sequence $y^k$ belongs to a maximizing measure we observe that this is true for $\Sigma_J$, as in \cite{Edgardo}. Notice that any maximizing measure for $f_J$ is a maximizing measure for $f$.

\endproof

Next, we prove theorem B. We emphasize that in this setting, it might not exist a bounded calibrated subaction, see section \ref{renewal}

\proof[Proof of Theorem \ref{ThmB}]
Let  $V$ be a continuous subaction and  $x\in\Sigma$. For each $y\in\Sigma$ such that $\sigma(y)=x$, $V(y)\leq V(x)-f(y)+m$. In the same way, if $\sigma(y^2)=y$,
\begin{equation*}
 \begin{split}
  V(y^2)&\leq  V(y)-f(y^2)+m\\ &\leq (
V(x)-f(y)+m)-f(y^2)+m\\&=V(x)-\sum_{j=0}^{1}(f(y^j)-m).
 \end{split}
\end{equation*}
This implies that for any  $y\in\Sigma$ satisfying $\sigma^n(y)=x$, we have

\begin{equation}\label{seq}
  V(y)\leq V(x)-\sum_{j=0}^{n-1}f(\sigma^j(y))-m.
\end{equation}
By definition of $S_f$ there exists a sequence  $(y^k)_{k\in\mathbb{N}}$ such that $\sigma^{n_k}(y^k)=x$, $d(y^k,\overline{y})\to0$,
 and $\lim_{k\to\infty}S_{n_k}(f-m)(y^{k})=S_f(x)$, because of \eqref{seq} we have

\begin{equation*}
  \lim_{k\to\infty}V(y^k)\leq \lim_{k\to\infty}V(x)-\sum_{j=0}^{n_k-1}f(\sigma^j(y^k))-m.
\end{equation*}
On the left hand we obtain $V(\overline{y})$, by continuity of $V$. So

\begin{equation*}
  V(\overline{y})\leq V(x)-S_f(x).
\end{equation*}
This proves the first part of the theorem.

 For the second part of the theorem, as we have shown,  $Var_l(S_f)\leq \sum_{j=l}^{\infty}Var_j(f)$. If $f$ is  H\"older continuous,
$Var_j(f)\leq A\theta^j$. Then  \[Var(S_f)= \sum_{l\geq1}Var_l(S_f)\leq
\sum_{l\geq1}\sum_{j=0}^{l-1}Var_j(f)=\sum_{l\geq1}\sum_{j=0}^{l-1}A\theta^j.\]
That last series is convergent because it can be written in the form
\[\sum_{j=0}^{\infty}Aj\theta^j.\]
This proves the second part of the theorem.

  For the last part of the theorem we use the known result in compact shifts  \cite{blt06} and an approximation argument. This argument uses the density of the union of compact sub-shifts in the whole space.

\begin{prop}
In the hypothesis of theorem \ref{ThmB}, let $V$ be a bounded calibrated subaction. Then, there exists $\overline{K}$ such that for any $k\geq\overline{K}$, the restriction $V|_{\Sigma_k}$
 is a calibrated subaction.
\end{prop}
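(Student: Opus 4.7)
The plan is to exploit the boundedness of $V$ together with the coercivity of $f$ to show that any pre-image realizing calibration of a point $x$ must begin with a letter bounded uniformly in $x$. Once such a uniform bound is in hand, it suffices to choose $\overline{K}$ so that all these admissible first letters already lie in $\mathcal{A}_{\overline{K}}$.

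Concretely, let $M = \sup_{\Sigma}|V| < \infty$. For any $x \in \Sigma$, a calibrating pre-image $y \in \sigma^{-1}(x)$ satisfies $f(y) = V(x) - V(y) + m$, hence
\[
f(y) \;\geq\; m - 2M.
\]
Since $f$ is coercive, there exists $J \in \mathbb{N}$ with $\sup_{z\in[j]} f(z) < m - 2M$ for every $j > J$. Therefore, whenever $y \in \Sigma$ is a calibrating pre-image of some $x \in \Sigma$, its first letter satisfies $y_0 \leq J$.

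Now choose $\overline{K}$ so that $\{1,2,\ldots,J\} \subset \mathcal{A}_{\overline{K}}$; this is possible because the alphabets $\mathcal{A}_k$ exhaust $\mathbb{N}$ as $k \to \infty$. Fix $k \geq \overline{K}$ and $x \in \Sigma_k$. Since $V$ is a calibrated subaction on $\Sigma$, pick $y \in \Sigma$ with $\sigma(y) = x$ and $f(y) = V(x) - V(y) + m$. By the previous step $y_0 \leq J$, so $y_0 \in \mathcal{A}_k$; the remaining letters of $y$ are letters of $x \in \Sigma_k$, hence all of $y$ belongs to $\mathcal{A}_k$, i.e. $y \in \Sigma_k$. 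The subaction inequality for $V|_{\Sigma_k}$ is inherited from the inequality on $\Sigma$ by restriction, using $m(f_k) = m$. Therefore $V|_{\Sigma_k}$ is a calibrated subaction for $f_k$.

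The only subtle point — and the one that forces the hypothesis on $V$ — is that without a uniform bound on $V$ one cannot a priori trap the calibrating pre-image inside the compact subshift: boundedness of $V$ combined with coercivity of $f$ is exactly what confines $y_0$ to a finite set, independent of $x$. Neither hypothesis alone would suffice.
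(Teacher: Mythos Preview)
Your argument is correct and follows essentially the same route as the paper: use the calibration identity $f(y)=V(x)-V(y)+m$ together with boundedness of $V$ to get a uniform lower bound on $f(y)$, then invoke coercivity to trap $y_0$ in a finite set and choose $\overline{K}$ so that this set lies in $\mathcal{A}_{\overline{K}}$. The paper writes the lower bound as $\inf V-\sup V+m$ rather than your $m-2M$, but this is cosmetic; your added remarks that $\sigma(y)=x$ forces the remaining letters of $y$ into $\mathcal{A}_k$ and that $m(f_k)=m$ make the conclusion slightly more explicit than the paper's version.
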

\begin{proof}
Given $V$ a bounded calibrated subaction and $x\in\Sigma$, if   $\sigma(y)=x$ e \[V(y)=V(x)-f(y)+m,\]
then \begin{equation}\label{afm2}
         f(y)=V(x)-V(y)+m\geq \inf V-\sup V +m.
        \end{equation}
On the other hand  $f$ is a  coercive potential, then there exists $J$ such that if $j>J$,
\begin{equation*}
 \sup f|_{[j]}<\inf V-\sup V +m.
\end{equation*}
therefore, any $y$ that satisfies \eqref{afm2} also satisfies $y_0\leq J$. So, if $\Sigma_k$ satisfies
$\{0,1,\dots,J\}\subset\mathcal{A}_k$, then for any  $x\in\Sigma_k $ there  exists $y\in\Sigma$ such that $\sigma(y)=x$ and
$V(y)=V(x)-f(y)+m$.  for the previous observations $y_0\leq J$, then $y\in\Sigma_k$. This proves that $V|_{\Sigma_k}$ is a calibrated subaction.
\end{proof}
For any $k$, $f_k$ is a potential with unique maximizing measure. Then for any  $j>J$, $V|_{\Sigma_j}$ is a calibrated subaction in $\Sigma_j$. By using results in \cite{blt06} we have that for any  $x\in\Sigma_j$ and $\overline{y}\in
supp(\mu_{max})$
\begin{equation}\label{item3}
 V(x)=S_{f_k}(\overline{y},x)+V(\overline{y}).
\end{equation}
This equality is true in the dense set  $\cup_{j>J}\Sigma_j$, by continuity of $V$ and  $S_f$, we can conclude that  for all $x\in\Sigma$, equation \ref{item3} is satisfied.

\endproof

Let us consider $V_1$ and $V_2$ two calibrated bounded  subactions and  $x\in\Sigma$, \eqref{item3} implies
\begin{equation*}
 V_1(x)-V_1(\overline{y})=V_2(x)-V_2(\overline{y}).
\end{equation*}
Then
\begin{equation*}
 V_1(x)-V_2(x)=V_2(\overline{y})-V_1(\overline{y}).
\end{equation*}
In other words, two bounded calibrated subactions differ by a constant.

Theorem B is proved. Let us now prove the last theorem.

\proof[proof of theorem \ref{ThmC}]

By the part (a) of theorem B, it is sufficient to prove theorem C for $V=S_f$.

Let us suppose that $S_f$ is a bounded potential.

Given $x\in\Sigma$ consider  $y\in\Sigma$ such that $\sigma(y)=x$ and $S_f(y)=S_f(x)-f(y)+m$.
On one hand
\begin{equation*}
 f(y)\geq \inf S_f-\sup S_f+m:=K> - \infty.
\end{equation*}
On the other hand, $f$ is a coercive potential. Let $J$ be such that for any $j>J$, $\sup f|_{[j]}<K$. Then for all $x\in\Sigma$ there exists $y$
such that $\sigma(y)=x$ and $y_0<J$, this is, $\Sigma$ satisfies BP property.

Now, let us suppose that $\Sigma$ is a BP shift. Let us consider $I$ such that for any  $x\in\Sigma$ there exist  $i<I$ with $ix\in\Sigma$. Given $i<I$ let us define $x^i\in\Sigma_J$ such that
$ix^i\in\Sigma_L$, $L$ is chosen such that $i\in\mathcal{A}_L$ for any $i<I$. Given $\epsilon>0$, for any
$x\in\Sigma$, there exists  $z$ such that $\sigma^n(z)=x$ and $d(z,\overline{y})<\epsilon$. In addition  $z_{n-1}<I$ and there exists
$\tilde{z}$ such that  $\sigma^l(\tilde{z})=x$ and $\tilde{z}_i<I$ for any $i<l$. Then
\begin{equation*}
 S_f(x)\geq  S_{f_L}(x^{z_{n-1}})-Var(f)> -\infty.
\end{equation*}
So, if  $x\in\Sigma$
\begin{equation*}
 S_f(x)\geq  \inf_{i<I}S_{f_L}(x^{i})-Var(f)> -\infty.
\end{equation*}
Then $S_f$ is a bounded bellow potential, which concludes the proof.

\endproof

If we assume the uniqueness of the maximizing measure, by the last part of theorem \ref{ThmB} we get the corollary, as follows.

\proof
Equivalence of (a) and (b) is theorem C.

We know (a) implies (c), since the barrier is always a calibrated subaction by theorem A.

Finally, by the last part of theorem B (and the uniqueness of the maximizing measure), we have (c) implies (a).

\endproof

\section{Renewal shifts}\label{renewal}
In this section we construct a family of examples where there is not any bounded calibrated subaction. These examples are based on the Renewal shifts as described in \cite{iommi,sarig01} and references therein.

From the corollary, we know that in order to find an example in which no bounded calibrated subaction exists, it is sufficient to consider a non BP transitive Markov shift.

Renewal \textit{shifts} form a class of topologically mixing (hence, topologically transitive) systems that does not satisfy the BIP condition. We use them to construct examples with the  BI property but without the BP property. If the transition matrix is the transpose of the previous one we obtain a BP shift that is not BI.

A Renewal  \textit{shift} is a topologically mixing Markov shift such that for each $n$ there exist at most one periodic orbit  $x=x_0x_1\dots x_{n-1}$ of period $n$ where  $x_j=0$ if, and only if
$j=kn$ for some  $k\in\mathbb{N}$.

Let $A$ be a transition matrix defined by $A(i,j)=1$ if, and only if $i,j$ are in the next cases
\begin{enumerate}
 \item $i=j=0$,
 \item $i=j+1$
 \item $i=0, j=d_n$ for some $n\in\mathbb{N}$.
\end{enumerate}
\begin{figure}
\centering
\begin{tikzpicture}[->,>=stealth',shorten >=1pt,auto,node distance=1cm,
                thick]
  \node (1) {0};
  \node (2) [right of=1] {$1$};
  \node (3) [right of=2] {\dots};
  \node (4) [right of=3] {$d_1$};
  \node (5) [right of=4,node distance=1.7cm] {$(d_1+1)$};
  \node (6) [right of=5,node distance=1.7cm] {$\dots$};
  \node (7) [right of=6] {$d_2$};
  \node (8) [right of=7,node distance=1.3cm] {$\dots$};
  \path
    (1) edge [loop left] (1)
        edge [bend left] (4)
        edge [bend left] (7)
    (2) edge  [below](1)
    (3) %edge [loop below] node {0.8} (3)
        %edge [bend left](1)
        edge (2)
    (4) edge (3)
    (5) edge (4)
    (6) edge (5)
    (7) edge (6)
    (8) edge (7);

\end{tikzpicture}
     \caption{Renewal shift}
   \label{renewalfig}
   \end{figure}
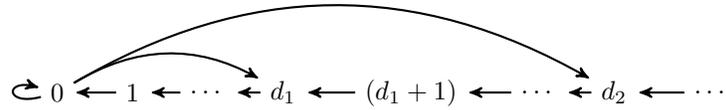

By this construction and recalling the definition of the BP condition in the preliminaries, notice that a Renewal shift satisfies the BP condition if, and only if, there is $J\in\mathbb{N}$ such that $0j$ is an admissible word for all $j \geq J$.

Let, for example, $\Sigma$ be a Renewal shift with $d_i=2i$. As a consequence of theorem \ref{ThmC}, for any coercive potential $f$ with bounded variation defined on $\Sigma$, any bounded calibrated subaction cannot exist.

\section{Acknowledgement}
The first author thanks CAPES for the support within the development of this work.

%\bibliography{bibliografia}

\begin{thebibliography}{BMP16}

\bibitem[BF14]{bf14}
R.~Bissacot and R.~Freire.
\newblock On the existence of maximizing measures for irreducible countable
  {M}arkov shifts: a dynamical proof.
\newblock {\em Ergodic Theory and Dynamical Systems}, 34(04):1103--1115, 2014.

\bibitem[BLT06]{blt06}
A.~Baraviera, A.~Lopes, and P.~Thieullen.
\newblock A large deviation principle for the equilibrium states of
  {H}{\"o}lder potentials: the zero temperature case.
\newblock {\em Stochastics and Dynamics}, 6(01):77--96, 2006.

\bibitem[BMP16]{Edgardo}
R.~Bissacot, J.~Mengue, and E.~P\'erez.
\newblock A large deviation principle for {G}ibbs states on countable {M}arkov
  \textit{shifts} at zero temperature.
\newblock {\em \url{www.arxiv.org/abs/1612.05831v2}}, 2016.

\bibitem[CLT01]{clt01}
G.~Contreras, A.~Lopes, and P.~Thieullen.
\newblock Lyapunov minimizing measures for expanding maps of the circle.
\newblock {\em Ergodic Theory Dynamical Systems}, (21):1379--1409, 2001.

\bibitem[FV18]{Victor}
R.~Freire and V.~Vargas.
\newblock Equilibrium states and zero temperature limit on topologically
  transitive countable {M}arkov shifts.
\newblock {\em Transactions of the {A}me. {M}at. {S}oc.}, 370(12):8451--8465,
  2018.

\bibitem[Gar17]{garibaldiBook}
Eduardo Garibaldi.
\newblock {\em Ergodic optimization in the expanding case: concepts, tools and
  applications}.
\newblock Springer, 2017.

\bibitem[GL08]{garibaldi08}
E.~Garibaldi and A.~Lopes.
\newblock On the {A}ubry-{M}ather theory for symbolic dynamics.
\newblock {\em Ergodic Theory and Dynamical Systems}, 28(3):791--815, 2008.

\bibitem[GLT09]{garibaldi09}
E.~Garibaldi, A.~Lopes, and P.~Thieullen.
\newblock On calibrated and separating sub-actions.
\newblock {\em Bulletin of the Brazilian Mathematical Society, New Series},
  40(4):577--602, 2009.

\bibitem[Iom07]{iommi}
G.~Iommi.
\newblock Ergodic optimization for renewal type shifts.
\newblock {\em Monatshefte f{\"u}r Mathematik}, 150(2):91--95, 2007.

\bibitem[Jen06]{Jen}
O.~Jenkinson.
\newblock Ergodic optimization.
\newblock {\em Discrete and Cont Dynamical Systems}, (15):197--224, 2006.

\bibitem[LM10]{Mengue}
A.~Lopes and J.~Mengue.
\newblock Zeta measures and thermodynamic formalism for temperature zero.
\newblock {\em Bulletin of the Brazilian Mathematical Society, New Series},
  41(3):449--480, 2010.

\bibitem[MU01]{mu01}
R.~Mauldin and M.~Urba{\'n}ski.
\newblock Gibbs states on the symbolic space over an infinite alphabet.
\newblock {\em Israel Journal of Mathematics}, 125(1):93--130, 2001.

\bibitem[Sar99]{sarig99}
O.~Sarig.
\newblock Thermodynamic formalism for countable {M}arkov shifts.
\newblock {\em Ergodic Theory and Dynamical Systems}, 19(06):1565--1593, 1999.

\bibitem[Sar01]{sarig01}
O.~Sarig.
\newblock Phase transitions for countable {M}arkov shifts.
\newblock {\em Communications in Mathematical Physics}, 217(3):555--577, 2001.

\bibitem[Sar03]{sarig03}
O.~Sarig.
\newblock Existence of {G}ibbs measures for countable {M}arkov shifts.
\newblock {\em Proceedings of the American Mathematical Society},
  131(6):1751--1758, 2003.

\bibitem[Sav99]{sav}
S.~Savchenko.
\newblock Cohomological inequalities for finite topological {M}arkov chains.
\newblock {\em Functional Analysis and Its Applications}, 33(3):236--238, 1999.

\end{thebibliography}

\end{document}